\documentclass[a4paper, 10pt]{amsart}
\usepackage{amscd, amsthm, amsmath, amssymb, amsfonts, mathrsfs, latexsym, epsfig}
\usepackage{graphicx}
\usepackage[cp1250]{inputenc}
\usepackage{geometry}
\newgeometry{tmargin=2cm, bmargin=2cm, lmargin=2cm, rmargin=2cm}

\renewcommand{\tilde}{\widetilde}
\newtheorem{df}{Definition}[section]
\newtheorem{lem}[df]{Lemma}
\newtheorem{tw}[df]{Theorem}
\newtheorem{pp}[df]{Proposition}
\newtheorem{col}[df]{Corollary}

\newtheorem{rem}[df]{Remark}
\newtheorem{ex}[df]{Example}
\newtheorem{qu}[df]{Question}
\newtheorem*{th1}{Theorem}
\author{Piotr Pokora}

\title{Rank function equations and their solution sets}

\begin{document}

\begin{abstract}
We examine so-called rank function equations and their solutions consisting of non-nilpotent matrices. Secondly, we present some geometrical properties of the set of solutions to certain rank function equations in the nilpotent case. The main results are Theorem 3.2 and Theorem 4.5. \\\\
\textit{Keywords: rank function, rank function equation, domination order, convex function, conjugacy class of a matrix, nilpotent matrix, Jordan canonical form, Jordan partition.} \\\\
\textit{2010 Mathematics Subject Classification: primary 15A03, secondary 15A30.}
\end{abstract}

\maketitle
\section{Introduction to the subject}

Conjugacy classes of matrices were studied by many authors in various settings, especially in the context of algebraic geometry. There are many deep geometrical results describing some properties of conjugacy classes, in particular we are interested here in some analogous of theorems due to Gerstenhaber -- see for example \cite{Ger}. These theorems can be formulated using the notion of rank functions of matrices $r_{A}(m) := {\rm rk}(A^{m})$ as introduced by Eisenbud and Saltman \cite{Eis}. 
Gerstenhaber investigated closures of conjugacy classes. For nilpotent matrices he obtained the following well-known theorem \cite{Ger}.
\begin{th1}
Let $A,B$ be $n \times n$ nilpotent matrices with entries from an arbitrary algebraically closed field and let $\mathcal{O}(B)$ denotes the conjugacy class of $B$ under the standard action of the general linear group by conjugation, see $(\bullet)$. Then
$$A \in \overline{\mathcal{O}(B)} \text{ if and only if } r_{A}(m) \leq r_{B}(m) \text{ for all } m \geq 0.$$
\end{th1}

In this note we extand this result to the Cartesian product set up, i.e. we obtain the following result, see also Theorem 4.5.
\begin{th1} Let $A_{1}, ..., A_{k}, B_{1}, ..., B_{k}$ be $n \times n$ nilpotent matrices matrices with entries from $\mathbb{C}$. Then 
$$ (A_{1}, ..., A_{k}) \in \overline{\mathcal{O}(B_{1}) \times ... \times \mathcal{O}(B_{k})} \text{ if and only if } r_{A_{i}}(m) \leq r_{B_{i}}(m) \text{ for all } m \geq 0 \text{ and } i \in \{1, ..., k\}.$$
\end{th1}
Keeping the notion of the above theorem and taking $B = B_{1} \oplus ... \oplus B_{k}$ one can be interested in solutions to the following inequality 
$$ (\bullet \bullet) \,\,\,\,\,\,\, r_{A_{1}}(m) + ... + r_{A_{k}}(m) \leq r_{B}(m).$$
Finding all solutions to $(\bullet \bullet)$ without additional assumptions on the matrices $A_{1}, ..., A_{k}, B$ seems to be a quite complicated task. However if we replace the inequality by the equality in $(\bullet \bullet)$, then more can be said. In fact we can generalize the equation 
$$r_{A_{1}}(m) + ... + r_{A_{k}}(m) = r_{B}(m)$$
to the following problem 
$$f(r_{A_{1}}(m)) + ... + f(r_{A_{k}}(m)) = g(r_{B}(m))$$
with arbitrary functions $f,g : \mathbb{N} \rightarrow \mathbb{N}$. This is called \emph{rank function equation}, see Definition \ref{df:1}. 
A somewhat technical statement concerning solutions of certain rank function equations is formulated in Theorem \ref{st1} and Theorem \ref{st111}, which are the other main results of this note.
\section{Preliminaries}

Throughout this paper we assume that $\mathbb{F}$ is an arbitrary field of characteristic zero.  We denote by $\mathbb{N}_{0}$ the set of all positive integers and by $\mathbb{N}$ the set of all non-negative integers. For $n \in \mathbb{N}_{0}$ we define $M_{n \times n}(\mathbb{F})$ to be the ring of all $n \times n$ matrices whose entries are elements of the field $\mathbb{F}$. This ring has a natural structure of a $n^2$-dimensional $\mathbb{F}$-vector space. We denote the zero matrix by $O_{n}$. The set of all nonsingular $n \times n$ matrices over $\mathbb{F}$ will be denoted by $GL(n, \mathbb{F})$. Finally, let $\mathfrak{N}_{n}$ be the set of all nilpotent $ n \times n$ matrices over $\mathbb{F}$. The group $GL(n, \mathbb{F})$ acts on $M_{n \times n}(\mathbb{F})$ and $\mathfrak{N}_{n}$ by conjugation. The conjugacy class $\mathcal{O}(A)$ of a matrix $A \in M_{n \times n}(\mathbb{F})$ is defined by 
$$ (\bullet) \,\,\,\,\,\,\, \mathcal{O}(A) = \{ U^{-1}AU : U \in GL(n, \mathbb{F}) \}.$$
By $\overline{\mathcal{O}(A)}$ we denote the Zariski closure of the conjugacy class of a matrix $A$ in $M_{n \times n}(\mathbb{F})$. 

We refer to \cite{Gan} for matrix theory and to \cite{Sh} for algebraic geometry. 

\begin{df}
The matrix 
$$N_{k} = \left( \begin{array}{cccccc} 
0 & 1 & 0 & ... & 0 & 0 \\
0 & 0 & 1 & ... & 0 & 0 \\
0 & 0 & 0          & ... & 0 & 0 \\
\vdots & \vdots & \vdots & \vdots & \vdots & \vdots \\
0 & 0 & 0 & ... & 0 & 1 \\
0 & 0 & 0 & ... & 0 & 0
\end{array} \right) \in M_{k \times k} (\mathbb{F})$$
is called the \emph{Jordan nilpotent block} of size $k$.
\end{df}
Such matrices are building blocks of all nilpotent matrices as the following classical results shows.
\begin{pp}
\label{postacjordana}
Let $A \in M_{n \times n} (\mathbb{F})$ be a nilpotent matrix. Then there exist $U \in GL(n, \mathbb{F})$, $\ell \in \mathbb{N} \setminus \{0\}$, and a weakly decreasing sequence $(k_{1} ,..., k_{\ell})$ of positive integers such that $U^{-1} A U = N_{k_{1}} \oplus ... \oplus N_{k_{\ell}}$. Moreover, $\ell$ and $(k_{1}, ..., k_{\ell})$ are uniquely determined by the matrix $A$.
\end{pp}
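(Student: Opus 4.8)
The plan is to establish existence by induction on $n$ and uniqueness by expressing the block sizes through the conjugation-invariant numbers $r_{A}(m) = {\rm rk}(A^{m})$. It is convenient to identify $A$ with the nilpotent endomorphism $x \mapsto Ax$ of $V = \mathbb{F}^{n}$; then $U^{-1}AU$ is just the matrix of this endomorphism in another basis, so the existence claim amounts to producing a basis of $V$ in which $A$ is represented by a direct sum of blocks $N_{k_{i}}$ with $k_{1} \geq \dots \geq k_{\ell}$.

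\emph{Existence.} If $A = O_{n}$ we take $\ell = n$ and $k_{1} = \dots = k_{n} = 1$. Otherwise, let $p \geq 2$ be the index of nilpotency, so that $A^{p-1} \neq O_{n} = A^{p}$, and fix $v \in V$ with $A^{p-1}v \neq 0$. A short argument (apply a suitable power of $A$ to a hypothetical relation) shows that $v, Av, \dots, A^{p-1}v$ are linearly independent, hence span a $p$-dimensional $A$-invariant subspace $C$ on which $A$ acts, in the basis $(A^{p-1}v, \dots, Av, v)$, exactly as the block $N_{p}$. The heart of the proof is to find an $A$-invariant complement $W$ with $V = C \oplus W$: one extends $(v, Av, \dots, A^{p-1}v)$ to a basis of $V$, lets $\varphi \in V^{*}$ be the coordinate functional dual to $A^{p-1}v$, and sets
$$W = \bigcap_{j=0}^{p-1} \ker(\varphi \circ A^{j}).$$
A direct check shows that the functionals $\varphi \circ A^{j}$, $0 \leq j \leq p-1$, restrict on $C$ to its $p$ coordinate functionals, whence $\dim W = n - p$ and $C \cap W = \{0\}$; the $A$-invariance of $W$ follows from the identity $(\varphi \circ A^{j})(Aw) = (\varphi \circ A^{j+1})(w)$ together with $A^{p} = O_{n}$. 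Since $A$ restricts to a nilpotent endomorphism of $W$ and $\dim W < n$, the inductive hypothesis furnishes a Jordan basis of $W$; adjoining the chain $(A^{p-1}v, \dots, Av, v)$ and rearranging the block sizes into a weakly decreasing sequence finishes the induction. I expect this construction of $W$ to be the main obstacle, since it is the only step that is not purely formal; the apparent alternative of passing to the quotient $V/C$ and lifting a Jordan basis runs into the same difficulty, because the lifts must be chosen so that $A$-chains lift to $A$-chains.

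\emph{Uniqueness.} Here one uses that ${\rm rk}(N_{k}^{m}) = \max(k-m, 0)$, so that for $A = N_{k_{1}} \oplus \dots \oplus N_{k_{\ell}}$,
$$r_{A}(m) = {\rm rk}(A^{m}) = \sum_{i=1}^{\ell} \max(k_{i} - m,\, 0), \qquad m \geq 0.$$
Forming first and second differences gives, for every $m \geq 1$,
$$r_{A}(m-1) - r_{A}(m) = \#\{\, i : k_{i} \geq m \,\}, \qquad r_{A}(m-1) - 2\,r_{A}(m) + r_{A}(m+1) = \#\{\, i : k_{i} = m \,\}.$$
Because ${\rm rk}\big((U^{-1}AU)^{m}\big) = {\rm rk}(U^{-1}A^{m}U) = {\rm rk}(A^{m})$, the quantities $r_{A}(m)$ depend only on $\mathcal{O}(A)$, so the right-hand sides above are invariants of the matrix $A$. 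They record the number of Jordan blocks of each size, and summing over $m$ recovers $\ell$; hence the weakly decreasing sequence $(k_{1}, \dots, k_{\ell})$ is uniquely determined by $A$.
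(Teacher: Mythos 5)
Your proof is correct. Note that the paper does not prove this proposition at all: it is stated as a classical result, with matrix theory deferred to the reference of Gantmacher, so there is no argument in the paper to compare yours against. What you supply is the standard self-contained proof: for existence, the cyclic subspace $C$ spanned by $v, Av, \dots, A^{p-1}v$ together with the $A$-invariant complement $W = \bigcap_{j=0}^{p-1}\ker(\varphi\circ A^{j})$, whose dimension count and invariance you verify correctly, followed by induction on $n$; for uniqueness, the identities $r_{A}(m-1)-r_{A}(m) = \#\{i : k_{i}\geq m\}$ and $r_{A}(m-1)-2r_{A}(m)+r_{A}(m+1) = \#\{i : k_{i}= m\}$, which indeed determine the partition from conjugation invariants. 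The uniqueness step is moreover in the same spirit as the rank-function formalism the paper relies on elsewhere (the properties of $r_{A}$ in Proposition 2.5 and the characterization in Theorem 2.6), so your argument fits naturally into the paper's framework while filling in a proof the author left to the literature. The only points stated without full detail (linear independence of the chain, and the permutation similarity needed to reorder the blocks weakly decreasingly) are routine and correctly indicated.
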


\begin{df}
The matrix $N_{k_{1}} \oplus ... \oplus N_{k_{\ell}}$ is referred to as the \emph{Jordan canonical form} of $A$ and the related sequence ${\rm jp}(A) =(k_{1}, ..., k_{\ell})$ is called the \emph{Jordan partition}.
\end{df}
For a nilpotent matrix $A$ with Jordan canonical form $N_{k_{1}} \oplus ... \oplus N_{k_{\ell}}$, we denote by $\tilde{A}$ the direct sum of all non-trivial nilpotent blocks, i.e. those with $k_{j} \geq 2$.

Now we recall some facts related to rank functions and rank function equations. For more details we refer to \cite{PS} and \cite{Sk1}.
\begin{df}
The function $r_{A} : \mathbb{N} \rightarrow \mathbb{N}$ defined by
$$r_{A}(m) = {\rm rk}(A^{m})$$
is called the \emph{rank function} of a matrix $A \in M_{n \times n}(\mathbb{F})$.
\end{df}
\begin{pp}
\label{properties}
For a matrix $A \in M_{n \times n}(\mathbb{F})$ its rank function satisfies the following conditions:
\begin{enumerate}
\item $r_{A}(0) = n$,
\item the function $r_{A}$ is weakly decreasing,
\item $A$ is nilpotent if and only if $r_{A}(n) = 0$,
\item if $r_{A}(m_{0}) = r_{A}(m_{0}+1)$ for some integer $m_{0} \in \mathbb{N}$, then $r_{A}(m_{0}) = r_{A}(m_{0}+i)$ for every $i \in \mathbb{N}$,
\item $r_{U^{-1}AU}(m) = r_{A}(m)$ for every $m \in \mathbb{N}$ and every $U \in GL(n, \mathbb{F})$,
\item if $A = A_{1} \oplus A_{2}$, where $A_{i} \in M_{n_{i} \times n_{i}}(\mathbb{F})$, $i = 1, 2$, and $\oplus$ is the standard direct sum of matrices, then $r_{A}(m) = r_{A_{1}}(m) + r_{A_{2}}(m)$ for all $m \in \mathbb{N}$.
\end{enumerate}
\end{pp}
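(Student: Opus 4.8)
The plan is to verify the six properties in turn, using only elementary linear algebra; most of them reduce to the observation that $A^{m}$ transforms predictably under the operation in question. For (i), the convention $A^{0} = I_{n}$ gives $r_{A}(0) = {\rm rk}(I_{n}) = n$. For (v), the key identity is $(U^{-1}AU)^{m} = U^{-1}A^{m}U$; since left- and right-multiplication by the invertible matrices $U^{-1}$ and $U$ preserves rank, $r_{U^{-1}AU}(m) = {\rm rk}(U^{-1}A^{m}U) = {\rm rk}(A^{m}) = r_{A}(m)$. For (vi), I would use that $(A_{1}\oplus A_{2})^{m} = A_{1}^{m}\oplus A_{2}^{m}$ is again block-diagonal and that the rank of a block-diagonal matrix is the sum of the ranks of its diagonal blocks, so $r_{A}(m) = r_{A_{1}}(m) + r_{A_{2}}(m)$.

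For (ii), writing $A^{m+1} = A\cdot A^{m}$ and invoking the standard inequality ${\rm rk}(XY) \leq {\rm rk}(Y)$ yields $r_{A}(m+1) \leq r_{A}(m)$ for all $m$, i.e. $r_{A}$ is weakly decreasing. For (iii), if $A$ is nilpotent then all its eigenvalues are $0$, so by the Cayley--Hamilton theorem (or directly from the Jordan form in Proposition \ref{postacjordana}, each block $N_{k_{j}}$ satisfying $N_{k_{j}}^{\,n} = O$) we get $A^{n} = O_{n}$ and hence $r_{A}(n) = 0$; conversely $r_{A}(n) = 0$ means $A^{n} = O_{n}$, which is exactly nilpotency.

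The only property requiring a genuine argument is (iv), the stabilization statement, and I would prove it by working with images rather than ranks. Since $A^{m+1} = A\cdot A^{m}$, one always has ${\rm im}(A^{m+1}) \subseteq {\rm im}(A^{m})$; thus the hypothesis $r_{A}(m_{0}) = r_{A}(m_{0}+1)$ forces ${\rm im}(A^{m_{0}+1}) = {\rm im}(A^{m_{0}})$, because an inclusion of subspaces of equal finite dimension is an equality. Applying the linear map $A$ to both sides gives ${\rm im}(A^{m_{0}+2}) = A\,{\rm im}(A^{m_{0}+1}) = A\,{\rm im}(A^{m_{0}}) = {\rm im}(A^{m_{0}+1}) = {\rm im}(A^{m_{0}})$, and an easy induction on $i$ then yields ${\rm im}(A^{m_{0}+i}) = {\rm im}(A^{m_{0}})$ for all $i \in \mathbb{N}$; taking dimensions gives $r_{A}(m_{0}+i) = r_{A}(m_{0})$. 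The main (and rather mild) obstacle is simply organising this image-stabilization induction cleanly; everything else is routine bookkeeping with well-known facts about matrix rank, and no special properties of the field $\mathbb{F}$ are used beyond finite-dimensionality.
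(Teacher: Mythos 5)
Your verification is correct, and all six parts are handled by exactly the standard elementary arguments (including the image-stabilization induction for (iv), which is the only part needing more than a one-line identity). The paper itself offers no proof of this proposition — it is recalled as known, with references to \cite{PS} and \cite{Sk1} — so your write-up simply supplies the routine argument the paper leaves implicit; there is nothing to reconcile.
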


Rank functions are characterized in the class of all non-negative integer-valued sequences by the following result, see Theorem 2 in \cite{Sk1}.
\begin{tw}
\label{characterization}
A function $r : \mathbb{N} \rightarrow \mathbb{N}$ with $r(0)=n$ is the rank function of a matrix $A \in M_{n \times n}(\mathbb{F})$ if and only if it is weakly decreasing and satisfies the following convexity condition
$$\forall m \in \mathbb{N} : r(m) + r(m+2) \geq 2r(m+1).$$
\end{tw}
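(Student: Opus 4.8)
\medskip
\noindent The plan is to prove the two implications separately, in both cases exploiting the kernel filtration $\{0\} = \ker A^{0} \subseteq \ker A^{1} \subseteq \ker A^{2} \subseteq \cdots$ of a matrix $A \in M_{n \times n}(\mathbb{F})$ together with the rank--nullity identity $\dim \ker A^{m} = n - r_{A}(m)$.

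For the ``only if'' direction, suppose $r = r_{A}$. The equality $r(0) = n$ and weak monotonicity are exactly parts (i) and (ii) of Proposition \ref{properties}, so only the convexity inequality requires work. First I would note that multiplication by $A$ carries $\ker A^{m+2}$ into $\ker A^{m+1}$ and carries $\ker A^{m+1}$ into $\ker A^{m}$, hence descends to a linear map
$$\varphi_{m} : \ker A^{m+2}/\ker A^{m+1} \longrightarrow \ker A^{m+1}/\ker A^{m}, \qquad v + \ker A^{m+1} \longmapsto Av + \ker A^{m}.$$
The key point is that $\varphi_{m}$ is injective: $Av \in \ker A^{m}$ means $A^{m+1}v = 0$, i.e.\ $v \in \ker A^{m+1}$, so the kernel of $\varphi_{m}$ is trivial. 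Therefore $\dim \ker A^{m+2} - \dim \ker A^{m+1} \le \dim \ker A^{m+1} - \dim \ker A^{m}$, and substituting $\dim \ker A^{j} = n - r(j)$ rearranges this to $r(m) + r(m+2) \ge 2r(m+1)$. (Alternatively, since the rank of a matrix does not change under field extension, one could pass to $\overline{\mathbb{F}}$ and read the inequality off the Jordan form; I prefer the filtration argument as it avoids leaving $\mathbb{F}$.)

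For the ``if'' direction, I would construct a matrix realizing $r$. Since $r$ is weakly decreasing and takes values in $\mathbb{N}$, it is eventually constant; write $s$ for its stable value and put $d_{m} := r(m) - r(m+1) \ge 0$, so $d_{m} = 0$ for $m \gg 0$ and the convexity hypothesis says precisely that the sequence $(d_{m})_{m \ge 0}$ is weakly decreasing. Setting $a_{m} := d_{m-1} - d_{m} \ge 0$ for $m \ge 1$, I would take
$$A := I_{s} \oplus \bigoplus_{m \ge 1} N_{m}^{\oplus a_{m}},$$
the block-diagonal matrix consisting of an $s \times s$ identity block together with $a_{m}$ copies of the Jordan nilpotent block $N_{m}$ for every $m \ge 1$. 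Two telescoping computations then finish the argument: the size of $A$ equals $s + \sum_{m \ge 1} m\, a_{m} = s + \sum_{m \ge 0} d_{m} = s + \big(r(0) - s\big) = n$, so $A \in M_{n \times n}(\mathbb{F})$; and, using part (vi) of Proposition \ref{properties} together with ${\rm rk}(I_{s}^{m}) = s$ and ${\rm rk}(N_{k}^{m}) = \max(k - m, 0)$, one obtains $r_{A}(m) = s + \sum_{k \ge 1} a_{k} \max(k - m, 0) = s + \sum_{t \ge m} d_{t} = r(m)$ for all $m$.

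The only delicate point is in the ``only if'' direction, namely verifying cleanly that $\varphi_{m}$ is well defined and identifying its kernel; once that is in place, the rest, and in particular the ``if'' direction, is routine bookkeeping with telescoping sums after the correct candidate matrix has been written down, so I do not expect a genuine obstacle there.
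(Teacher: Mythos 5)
Your proof is correct: the injectivity of the map induced by $A$ on successive kernel quotients gives the convexity inequality, and the block construction $I_{s}\oplus\bigoplus_{m\ge 1}N_{m}^{\oplus a_{m}}$ with multiplicities given by second differences realizes any admissible $r$, with the telescoping identities checking out. Note that the paper itself offers no proof of this statement --- it is quoted as Theorem 2 of Skrzy\'nski's paper \cite{Sk1} --- and your argument is essentially the standard one used there (necessity from the kernel filtration or Jordan form, sufficiency by an explicit direct sum of an invertible block and Jordan nilpotent blocks), so there is no substantive divergence to report.
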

Now, we define the main object of our interest.
\begin{df}
\label{df:1}
Let $k, n \in \mathbb{N} \setminus \{0, 1\}$. For fixed functions $f, g : \mathbb{N} \rightarrow \mathbb{N}$ and a nonempty set $S \subseteq \mathbb{N}_{0}$, a \emph{rank function equation} is the equation
\begin{equation}
\label{row1}
f(r_{A_{1}}(m)) + ... + f(r_{A_{k}}(m)) = g(r_{B}(m))
\end{equation}
for all $m \in S$. The indeterminates are matrices $A_{1}, ..., A_{k}, B \in M_{n \times n}(\mathbb{F})$.
\end{df}
For $f = g = {\rm id}_{\mathbb{N}}$, equation (\ref{row1}) reduces to 
\begin{equation}
\label{rowid}
r_{A_{1}}(m) + ... + r_{A_{k}}(m) = r_{B}(m).
\end{equation}
In this note we will consider only non-trivial solutions, which means that  solutions $(A_{1}, ..., A_{k}, B)$ consisting of all nonzero matrices. In \cite{PS} we proved the following theorem.
\begin{tw}
\label{pstw1}
Consider a strictly increasing convex function $f : [0, + \infty) \longrightarrow \mathbb{R}$. Assume that $f(\mathbb{N}) \subseteq \mathbb{N}$ and $f(0)=0$. For nilpotent matrices $A_{1}, ..., A_{k} \in \mathfrak{N}_{n}$ define $r (m) := f (r_{A_{1}} (m)) + ... + f (r_{A_{k}}(m))$.
 Then the following conditions are equivalent:
\begin{enumerate}
\item
there exists a matrix $B \in M_{n \times n} (\mathbb{F})$ such that $(A_1, ..., A_k, B)$ is a solution to equation
\begin{equation}
\label{r1}
f (r_{A_{1}} (m)) + ... + f (r_{A_{k}}(m)) = r_{B}(m)
\end{equation}
with  S = \{1, ..., n\},
\item
$2 r (1) - r (2) \leq n$.
\end{enumerate}
Moreover, if the above conditions are satisfied, then the matrix $B$ is nilpotent and unique up to conjugation.
\end{tw}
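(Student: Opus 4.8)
The plan is to reduce both implications to the characterization of rank functions given in Theorem~\ref{characterization}, so that the whole argument takes place at the level of integer sequences. First I would record that $r$ is $\mathbb{N}$-valued (since $f(\mathbb{N})\subseteq\mathbb{N}$) together with two elementary facts about it. Because $f$ is increasing and each $r_{A_i}$ is weakly decreasing by Proposition~\ref{properties}(ii), every composite $f\circ r_{A_i}$ is weakly decreasing, hence so is $r$. Because each $A_i$ is a nilpotent $n\times n$ matrix, Proposition~\ref{properties}(iii)--(iv) gives $r_{A_i}(m)=0$ for all $m\ge n$, and since $f(0)=0$ this yields $r(m)=0$ for all $m\ge n$; in particular $r(n)=0$.

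The technical heart of the proof is that $r$ again satisfies the convexity condition $r(m)+r(m+2)\ge 2r(m+1)$, and I would prove this one summand at a time. Fix $i$ and $m$ and put $a=r_{A_i}(m)\ge b=r_{A_i}(m+1)\ge c=r_{A_i}(m+2)$; Theorem~\ref{characterization} applied to $A_i$ says exactly that $a-b\ge b-c$. If $a=b$ then also $b=c$ and $f(a)+f(c)=2f(b)$ trivially; if $a>b$, then monotonicity of the difference quotients of the convex function $f$ gives $\frac{f(b)-f(c)}{b-c}\le\frac{f(a)-f(b)}{a-b}$ (to be read as $0\le f(a)-f(b)$ in the degenerate case $b=c$), and since $0\le b-c\le a-b$ we conclude $f(b)-f(c)\le f(a)-f(b)$, that is, $f(a)+f(c)\ge 2f(b)$. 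Summing over $i$ gives the convexity condition for $r$. I expect this transport of rank-function convexity through $f$ to be the only place where more than bookkeeping is required; the remaining steps are short.

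Next I would establish the equivalence. For (i)$\Rightarrow$(ii): if $(A_1,\dots,A_k,B)$ solves~(\ref{r1}) with $S=\{1,\dots,n\}$, then $r_B$ is the rank function of an $n\times n$ matrix, so $r_B(0)=n$, and Theorem~\ref{characterization} gives $r_B(0)+r_B(2)\ge 2r_B(1)$; since $1,2\in S$ (here $n\ge 2$ is used) and $r_B$ agrees with $r$ on $S$, this reads exactly $n+r(2)\ge 2r(1)$. For (ii)$\Rightarrow$(i): define $\rho\colon\mathbb{N}\to\mathbb{N}$ by $\rho(0)=n$ and $\rho(m)=r(m)$ for $m\ge 1$. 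For $m\ge 1$ the function $\rho$ is weakly decreasing and satisfies the convexity condition by the two facts above; at $m=0$ we have $\rho(0)=n\ge r(1)=\rho(1)$, because (ii) together with $r(2)\le r(1)$ forces $r(1)\le 2r(1)-r(2)\le n$, and the inequality $\rho(0)+\rho(2)=n+r(2)\ge 2r(1)=2\rho(1)$ is precisely (ii). By Theorem~\ref{characterization} there is a matrix $B\in M_{n\times n}(\mathbb{F})$ with $r_B=\rho$; then $r_B(m)=r(m)$ for every $m\in\{1,\dots,n\}$, so $(A_1,\dots,A_k,B)$ is a solution.

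Finally, for the addendum: any solution $B$ satisfies $r_B(n)=r(n)=0$, so $B$ is nilpotent by Proposition~\ref{properties}(iii). Its rank function is then completely pinned down --- it equals $n$ at $0$, equals $r$ on $\{1,\dots,n\}$, and vanishes for $m\ge n$ by Proposition~\ref{properties}(iv) --- so the Jordan partition of $B$, whose number of parts of size at least $i$ equals $r_B(i-1)-r_B(i)$, is uniquely determined; hence $B$ is unique up to conjugation by Proposition~\ref{postacjordana}.
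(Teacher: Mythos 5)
Your proof is correct and follows essentially the route the paper takes (the paper cites \cite{PS} for this statement, and its proof of the generalization, Theorem~\ref{st111}, uses the same scheme): reduce existence of $B$ to the sequence characterization of Theorem~\ref{characterization} by defining the candidate rank function to be $n$ at $0$ and $r(m)$ for $m\geq 1$, so that the only nontrivial constraints are $n\geq r(1)$ and $n+r(2)\geq 2r(1)$. The only difference is that you prove the transport of convexity through $f$ inline via difference quotients, where the paper invokes a lemma from \cite{PS}; your nilpotency and uniqueness argument likewise matches the intended one (via Proposition~\ref{ppp1} or, equivalently, the Jordan partition count $r_{B}(i-1)-r_{B}(i)$).
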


\section{Rank Function Equations for non-nilpotent matrices}
Let us recall that for a matrix $A \in M_{n \times n}(\mathbb{F})$ the number $r_{A}(n) \in \mathbb{N}$ is called the \emph{stable rank}. 
We will need the following fact, Proposition 4 in \cite{Sk1}.
\begin{pp}
\label{ppp1}
For nilpotent matrices $A, B \in \mathfrak{N}_{n}(\mathbb{F})$ the following conditions are equivalent:
\begin{itemize}
\item $B = U^{-1}AU$ for a certain $U \in GL(n, \mathbb{F})$,
\item $r_{A}(m) = r_{B}(m)$, $m \in \mathbb{N}$.
\end{itemize}
\end{pp}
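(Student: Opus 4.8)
The plan is to prove the two implications separately, with essentially all of the content in the direction ``equal rank functions $\Rightarrow$ conjugate'', which amounts to showing that the rank function is a complete conjugacy invariant for nilpotent matrices. The forward implication is immediate: if $B=U^{-1}AU$ then $B^{m}=U^{-1}A^{m}U$ for every $m\in\mathbb{N}$, so $A^{m}$ and $B^{m}$ are similar and hence have equal rank; this is exactly Proposition~\ref{properties}(v), and uses nothing about nilpotency.

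For the converse I would first record the rank function of a single Jordan block: $N_{k}^{m}$ is the matrix with $1$'s along the $m$-th superdiagonal and zeros elsewhere, so ${\rm rk}(N_{k}^{m})=\max(k-m,0)$. Writing the Jordan canonical form of $A$ as $N_{k_{1}}\oplus\cdots\oplus N_{k_{\ell}}$ and applying Proposition~\ref{properties}(v)--(vi) yields the closed form
$$r_{A}(m)=\sum_{j=1}^{\ell}\max(k_{j}-m,0).$$

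Next I would invert this formula. Setting $a_{t}:=\#\{\,j : k_{j}=t\,\}$, a short telescoping computation gives $r_{A}(m)-r_{A}(m+1)=\#\{\,j : k_{j}\ge m+1\,\}$, and consequently the second difference satisfies
$$r_{A}(m-1)-2\,r_{A}(m)+r_{A}(m+1)=a_{m}$$
for every $m\ge 1$, where the case $m=1$ uses $r_{A}(0)=n$ from Proposition~\ref{properties}(i). Hence the multiset $\{k_{1},\dots,k_{\ell}\}$, and therefore the entire Jordan partition ${\rm jp}(A)$, is recovered explicitly from the values of $r_{A}$.

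To finish, assume $r_{A}(m)=r_{B}(m)$ for all $m\in\mathbb{N}$. The inversion formula then forces ${\rm jp}(A)={\rm jp}(B)$, so by the uniqueness clause of Proposition~\ref{postacjordana} the matrices $A$ and $B$ have a common Jordan canonical form $C$; choosing $U,V\in GL(n,\mathbb{F})$ with $U^{-1}AU=C=V^{-1}BV$ gives $B=(UV^{-1})^{-1}A(UV^{-1})$, as required. I do not anticipate a genuine obstacle here: the only mildly delicate step is the index bookkeeping in the second-difference identity and its boundary cases, but the convexity inequality of Theorem~\ref{characterization} guarantees that these differences behave as expected, so the argument goes through routinely.
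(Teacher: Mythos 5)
Your proof is correct. Note that the paper itself contains no argument for Proposition \ref{ppp1}; it simply quotes it as Proposition 4 of the cited reference of Skrzy\'nski, so there is no internal proof to compare against, and your argument --- the closed form $r_{A}(m)=\sum_{j}\max(k_{j}-m,0)$, the recovery of the number of Jordan blocks of size $m$ as the second difference $r_{A}(m-1)-2r_{A}(m)+r_{A}(m+1)$, and then uniqueness of the Jordan canonical form (Proposition \ref{postacjordana}) --- is exactly the standard proof one would expect to find there. One small remark: the closing appeal to the convexity condition of Theorem \ref{characterization} is superfluous, since your second-difference identity already identifies each difference as a count of blocks and needs no separate input about the behaviour of $r_{A}$.
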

The following result characterizes explicitely non-trivial solutions of equation (\ref{rowid}).
\begin{tw}
\label{st1}
Consider equation (\ref{rowid}) with $S = \mathbb{N}_{0}$. Assume that $(A_{1}, ..., A_{k}, B)$ is a solution to (\ref{rowid}), where $A_{j}, B \in M_{n \times n}(\mathbb{F})$, $r_{A_{j}}(n) = q_{j} \in \mathbb{N}_{0}$ for $j = 1, ..., k$ and $n \geq q:=q_{1} + ... + q_{k}$. Then $B$ is similar to $C \oplus D$, where $D \in GL(q, \mathbb{F})$ and $C \in \mathfrak{N}_{n - q}(\mathbb{F})$ is a nilpotent matrix such that its nonzero nilpotent blocks in the Jordan canonical form are conjugate to the direct sum of all nonzero nilpotent blocks contained in the Jordan canonical forms of $A_{1}, ..., A_{k}$.
\end{tw}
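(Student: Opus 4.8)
The plan is to reduce everything to the nilpotent case by first separating the stable (invertible) part of each matrix from its nilpotent part, and then using the characterization of rank functions together with Proposition \ref{ppp1} to pin down the nilpotent part. For a matrix $A_j \in M_{n \times n}(\mathbb{F})$ with stable rank $q_j = r_{A_j}(n)$, the Fitting decomposition gives $A_j$ similar to $A_j' \oplus A_j''$ with $A_j' \in \mathfrak{N}_{n-q_j}(\mathbb{F})$ and $A_j'' \in GL(q_j, \mathbb{F})$, so that $r_{A_j}(m) = r_{A_j'}(m) + q_j$ for all $m \ge 1$ by property (vi) of Proposition \ref{properties}. Likewise write $B$ similar to $C \oplus D$ with $C \in \mathfrak{N}_{n-q_B}(\mathbb{F})$ and $D \in GL(q_B, \mathbb{F})$, where $q_B = r_B(n)$.

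First I would evaluate equation (\ref{rowid}) at $m = n$ to get $q_1 + \dots + q_k = q_B$, i.e. $q_B = q$; this already forces the invertible block of $B$ to have the correct size $q$, and since any $D \in GL(q, \mathbb{F})$ is permitted by the statement, nothing more is required of $D$. Next, subtracting this from equation (\ref{rowid}) for general $m \ge 1$ yields
\begin{equation*}
r_{A_1'}(m) + \dots + r_{A_k'}(m) = r_C(m) \qquad \text{for all } m \ge 1,
\end{equation*}
an identity among rank functions of nilpotent matrices. Both sides also agree at $m = 0$ after we pad appropriately: set $A = A_1' \oplus \dots \oplus A_k' \in \mathfrak{N}_N$ with $N = \sum_j (n - q_j) = kn - q$; then by property (vi) we have $r_A(m) = r_C(m)$ for all $m \ge 1$, though the ambient sizes differ, the discrepancy being only in the zero blocks.

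The cleanest way to conclude is to pass to the matrices $\tilde{A_j'}$ (the direct sum of the nontrivial Jordan blocks of $A_j'$) and $\tilde C$. Since deleting trivial Jordan blocks changes $r_A(m)$ only at $m = 0$, the identity above shows that $\tilde C$ and $\widetilde{A_1'} \oplus \dots \oplus \widetilde{A_k'}$ have equal rank functions for $m \ge 1$, hence equal rank functions for all $m \ge 0$ once we observe both are square of the same size — which follows because the common value of the rank function at $m=1$ determines, via the convexity/decreasing structure of Theorem \ref{characterization}, the total number of boxes, and $r(0)$ is then forced. Applying Proposition \ref{ppp1} gives that $\tilde C$ is conjugate to $\widetilde{A_1'} \oplus \dots \oplus \widetilde{A_k'}$, which is precisely the assertion that the nonzero nilpotent blocks of $C$ are conjugate to the direct sum of all nonzero nilpotent blocks of the $A_j$. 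Finally, reassembling, $B \sim C \oplus D$ with $C \in \mathfrak{N}_{n-q}$ of the stated form and $D \in GL(q, \mathbb{F})$, completing the proof.

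The main obstacle I anticipate is the bookkeeping of matrix sizes when comparing rank functions of nilpotent matrices sitting in ambient spaces of different dimensions: the equality $r_{A_j'}(m) = r_{A_j}(m) - q_j$ holds only for $m \ge 1$, and one must be careful that Proposition \ref{ppp1}, which is stated for two nilpotent matrices of the \emph{same} size $n$, is applied to genuinely equidimensional objects. Restricting attention to the $\tilde{\cdot}$ operation and checking that the number of nontrivial blocks and their total size are both recovered from $r(1)$ and $r(2)$ (via $r(1) - r(2) = $ number of blocks of size $\ge 2$ counted with multiplicity in a suitable sense, and $r(1) = $ total size minus number of blocks) is the one place where a short computation is genuinely needed rather than a citation.
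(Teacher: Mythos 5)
Your proof follows essentially the same route as the paper's: decompose each $A_j$ (and $B$) into a nilpotent part plus an invertible part, evaluate the equation at $m=n$ to identify the stable rank of $B$ with $q$, reduce to an equality of rank functions of the nilpotent parts, strip the trivial Jordan blocks, match the sizes of the resulting equidimensional matrices (via $2r(1)-r(2)$), and conclude with Proposition \ref{ppp1}. The explicit size-matching computation you flag is precisely the step the paper states without comment, so your argument is correct and, if anything, slightly more careful on that point.
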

\begin{proof}
By [9, Thm. 1] without loss of generality we may assume that $A_{j} = B_{j} \oplus S_{j}$, where $B_{j} \in \mathfrak{N}_{n-q_{j}}(\mathbb{F})$ are nilpotent matrices and $S_{j} \in GL(q_{j}, \mathbb{F})$ for all $j \in \{1, ..., k\}$. Let $\tilde{B_{j}}$ be the direct sum of all nonzero nilpotent blocks that appear in the Jordan canonical form of the matrix $B_{j}$. Then there exist $U_{j} \in GL(n, \mathbb{F})$ such that $U_{j}^{-1}A_{j}U_{j} = \tilde{B_{j}} \oplus S_{j} \oplus O_{n - q_{j} - d_{j}}$ with $d_{j} \in \mathbb{N}$, which depends on the matrices $B_{j}$. Since $(A_{1}, ..., A_{k}, B)$ is a solution to the equation (\ref{rowid}), then for all $m \in \mathbb{N}_{0}$ we have
$$r_{B}(m) = \sum_{j=1}^{k} r_{A_{j}}(m) = \sum_{j=1}^{k} r_{U^{-1}_{j}A_{j}U_{j}}(m) = \sum_{j=1}^{k} r_{\tilde{B}_{j}\oplus S_{j} \oplus O_{n-q_{j}-d_{j}}}(m) = \sum_{j=1}^{k} r_{\tilde{B}_{j}}(m) + q.$$
Obviously $r_{B}(n) = q$, thus $B$ is similar to $C \oplus D$ with $C \in \mathfrak{N}_{n - q}(\mathbb{F})$ and $D \in GL(q, \mathbb{F})$. Then there exists a matrix $V \in GL(n, \mathbb{F})$ such that $V^{-1}BV = \tilde{C} \oplus D \oplus O_{n - q - d_{C}}$, where $d_{C}$ depends on the matrix $C$. Since for $D \in GL(q, \mathbb{F})$ we have $r_{D}(m) = q$ for all $m \in \mathbb{N}$, then
$$\sum_{j=1}^{k} r_{\tilde{B}_{j}}(m) + q = r_{\tilde{B}_{1} \oplus ... \oplus \tilde{B}_{k}}(m) + q = r_{B}(m) = r_{V^{-1} B V}(m) = r_{\tilde{C} \oplus D \oplus O_{n- q -d_{C}}}(m) = r_{\tilde{C}}(m) + q.$$
Now we can focus on the conditions:
$$\left\{
\begin{array}{l} 
\forall\, m \in \mathbb{N}_{0} :\, r_{\tilde{B}_{1} \oplus ... \oplus \tilde{B}_{k}}(m) = r_{\tilde{C}}(m),\\
r_{\tilde{B}_{1} \oplus ... \oplus \tilde{B}_{k}} (n) = r_{\tilde{C}} (n) = 0.
\end{array}
\right.$$
Thus $r_{\tilde{B}_{1} \oplus ... \oplus \tilde{B}_{k}}(0) = r_{\tilde{C}}(0) = d_{C}$ and by Proposition \ref{ppp1} we obtain $\tilde{B_{1}} \oplus ... \oplus \tilde{B_{k}} = W^{-1}\tilde{C}W$ for a certain $W \in GL(d_{C}, \mathbb{F})$, what ends the proof.
\end{proof}
\begin{rem}
Notice that solutions in the above case are not unique (the invertible matrix $D$ can be chosen arbitrarily).
\end{rem}
The next result generalizes Theorem 2.8.
\begin{tw}
\label{st111}
Consider the rank function equation (\ref{r1}) with $S = \{1, ..., n\}$. Let $f : [0, \infty) \longrightarrow \mathbb{R}$ be a strictly increasing convex function such that $f(\mathbb{N}) \subseteq \mathbb{N}$ and $f(0)=0$. Let $A_1, ..., A_k \in M_{n \times n} (\mathbb{F})$ with $r_{A_{j}}(n) = q_{j} \in \mathbb{N}_{0}$ for $j \in \{1, ..., k\}$. Then the following conditions are equivalent:
\begin{enumerate}
\item
there exists a matrix $B \in M_{n \times n}$ such that $(A_1, ..., A_k, B)$ is a solution to (\ref{r1}),
\item
$2 r (1) - r (2) \leq n$, where $r(m) := f(r_{A_{1}}(m)) + ... + f(r_{A_{k}}(m))$ for $m \in \mathbb{N}$.
\end{enumerate}
\end{tw}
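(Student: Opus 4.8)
The plan is to reduce the equivalence to the characterization of rank functions given in Theorem \ref{characterization}, following essentially the argument that proves Theorem \ref{pstw1}; the nilpotency of $A_{1}, \dots, A_{k}$ is never really used there, it only influences what can be said about a resulting $B$. Throughout I would work with the single function $r(m) = f(r_{A_{1}}(m)) + \dots + f(r_{A_{k}}(m))$ and show that, after replacing its value at $m = 0$ by $n$, it becomes the rank function of some matrix in $M_{n \times n}(\mathbb{F})$ exactly when (ii) holds.

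The first thing to check is that $r$ is weakly decreasing and satisfies $r(m) + r(m+2) \ge 2 r(m+1)$ for all $m \in \mathbb{N}$. Monotonicity is clear from Proposition \ref{properties}(ii) together with $f$ being increasing, so the work is in the convexity, and for that it is enough to see that each summand $f \circ r_{A_{j}}$ is convex. Fixing $m$ and writing $a = r_{A_{j}}(m) \ge b = r_{A_{j}}(m+1) \ge c = r_{A_{j}}(m+2)$, Theorem \ref{characterization} applied to the rank function $r_{A_{j}}$ gives $a - b \ge b - c$; hence, setting $\delta = b - c \ge 0$, we have $a \ge c + 2\delta$, and the midpoint convexity of $f$ at $c, c+\delta, c+2\delta$ combined with its monotonicity yields
\[
2 f(b) \;=\; 2 f(c + \delta) \;\le\; f(c) + f(c + 2\delta) \;\le\; f(c) + f(a).
\]
Summing over $j$ gives the desired property of $r$.

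The implication (i) $\Rightarrow$ (ii) is then immediate: if $B \in M_{n \times n}(\mathbb{F})$ solves (\ref{r1}) on $S = \{1, \dots, n\}$ then, since $n \ge 2$, we have $r_{B}(1) = r(1)$ and $r_{B}(2) = r(2)$, and applying Proposition \ref{properties}(i) and the convexity condition of Theorem \ref{characterization} to the rank function $r_{B}$ at $m = 0$ gives $n + r(2) \ge 2 r(1)$. Conversely, assume (ii). Since $r$ is weakly decreasing, $r(1) \le 2 r(1) - r(2) \le n$, so the function $\rho \colon \mathbb{N} \to \mathbb{N}$ with $\rho(0) = n$ and $\rho(m) = r(m)$ for $m \ge 1$ (well defined because $f(\mathbb{N}) \subseteq \mathbb{N}$) is weakly decreasing, and it satisfies the convexity inequality for every $m$: for $m \ge 1$ this is the property of $r$ established above, and for $m = 0$ it is precisely (ii). By Theorem \ref{characterization} there is a $B \in M_{n \times n}(\mathbb{F})$ with $r_{B} = \rho$, and then $(A_{1}, \dots, A_{k}, B)$ solves (\ref{r1}) on $\{1, \dots, n\}$.

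The one step with genuine content is the convexity claim in the second paragraph: that composing a rank function with a strictly increasing convex $f$ satisfying $f(0) = 0$ preserves the convexity inequality. The short estimate above disposes of it, and an estimate of this type is already contained in \cite{PS}, so one could alternatively appeal to that reference. I would finish with a remark, parallel to the one after Theorem \ref{st1}, noting that in contrast with the nilpotent setting of Theorem \ref{pstw1} the matrix $B$ here is neither nilpotent in general nor unique up to conjugacy: its stable rank is forced to be $r(n) = f(q_{1}) + \dots + f(q_{k})$, but the equation imposes no condition whatsoever on its invertible part.
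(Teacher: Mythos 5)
Your proof is correct and follows essentially the same route as the paper: both reduce the problem, via Theorem \ref{characterization}, to checking that the function equal to $n$ at $m=0$ and to $r(m)$ for $m\geq 1$ is weakly decreasing and satisfies the convexity condition, and both observe that only the inequalities involving $m=0$ are not automatic, which is exactly condition (ii). The only differences are cosmetic: you prove the discrete convexity of $m\mapsto f(r_{A_{j}}(m))$ directly where the paper cites [7, Lemma 3.2], and you bypass the decomposition $A_{j}\sim \bar{A_{j}}\oplus D_{j}$ from [9, Thm. 1], which the paper uses only to identify the stable value $r(n)=f(q_{1})+\dots+f(q_{k})$.
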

\begin{proof}

By [9, Thm. 1] $A_{j}$'s are similar to $\bar{A_{j}} \oplus D_{j}$, where $D_{j} \in GL(q_{j}, \mathbb{F})$ and $\overline{A_{j}} \in \mathfrak{N}_{n - q_{j}}(\mathbb{F})$ are nilpotent matrices for all $j \in \{1, ..., k\}$. Obviously
\begin{center}
$r(n) = f(r_{\bar{A_{1}} \oplus D_{1}}(n)) + ... + f(r_{\bar{A_{k}} \oplus D_{k}}(n)) = f(r_{\bar{A_{1}}}(n) + r_{D_{1}}(n)) + ... + f(r_{\bar{A_{k}}}(n) + r_{ D_{k}}(n)) = f(r_{D_{1}}(n)) + ... + f(r_{D_{k}}(n)) = f(q_{1}) + ... + f(g_{k}).$
\end{center}
In the virtue of Theorem \ref{characterization}, condition $(i)$ holds true iff the function $r_{B} : \mathbb{N} \longrightarrow \mathbb{N}$ defined by
$$r_{B} (m) = \left\{
\begin{array}{ll}
n &\mbox{for $m = 0$,}\\
r (m)&\mbox{for $m \in \{1, ..., n\}$,}\\
\sum _{j=1}^{k} f(q_{j}) &\mbox{for $m > n $}
\end{array}
\right.$$
is weakly decreasing and such that $r_{B} (m) + r_{B} (m + 2) \geq 2 r_{B} (m + 1)$ for all $m \in \mathbb{N}$. By [7, Lemma 3.2] we see that $r_{B}$ is a rank function and thus (i) is satisfied iff $n \geq r_{B} (1)$ and $n + r_{B} (2) \geq 2 r_{B} (1)$. By the monotonicity of $r_{B}$, the last two inequalities hold iff $n - r_{B} (1) \geq r_{B} (1) - r_{B} (2)$, and this is the condition (ii). 
\end{proof}

\section{Some consequences of Gerstenhaber theorem}

For nilpotent matrices $A, B \in \mathfrak{N}_{n}(\mathbb{F})$ we define
$$A \prec B \iff {\rm rk}(A^{m}) \leq {\rm rk}(B^{m}) \,\, {\rm for} \,\, {\rm all} \,\, m \in \mathbb{N} .$$
It can be shown that $\prec$ is a partial order. This order is usually called the \emph{dominance}.

From now on we fix a function $f: [0, \infty) \longrightarrow \mathbb{R}$, which is convex, strictly increasing, $f(0) = 0$ and maps all non-negative integers to non-negative integers. Let us define the following set
\begin{center}
{\rm Sol} = \{ $(A_{1}, ..., A_{k}, B) \in \underbrace{\mathfrak{N} \times ... \times \mathfrak{N}}_{k+1}$ : $(A_{1}, ..., A_{k}, B)$ form a solution to \newline the  equation (\ref{r1}) with fixed $f$ and $S = \mathbb{N}_{0}$ \}.
\end{center}

For $(A_{1}, ..., A_{k}) \in \mathcal{A}$, where $\mathcal{A} \subseteq \underbrace{\mathfrak{N} \times ... \times \mathfrak{N}}_{k}$ is an arbitrary subset, we define the rank matrix 
$${\rm Rk}(A_{1}, ..., A_{k} ) = \left( \begin{array}{ccccc} 
r_{A_{1}}(0) & r_{A_{1}}(1) &  .. & r_{A_{1}}(n-1) & r_{A_{1}}(n) \\
r_{A_{2}}(0) & r_{A_{2}}(1) &  .. & r_{A_{2}}(n-1) & r_{A_{2}}(n) \\
\vdots & \vdots & \vdots &  \vdots & \vdots \\
r_{A_{k}}(0) & r_{A_{k}}(1) &  .. & r_{A_{k}}(n-1) & r_{A_{k}}(n)\\
\end{array} \right).$$

Let us denote by ${\rm Rank(\mathcal{A})} \subseteq M_{k \times (n+1)}(\mathbb{F})$ the set, which consist of all ma\-tri\-ces of the above form. Note that the set ${\rm Rank(\mathcal{A})}$ is always finite. 

For matrices ${\rm Rk}(A_{1}, ...,A_{k}) = [a_{ij}]$ and ${\rm Rk}(A_{1}', ..., A_{k}') =  [a'_{ij}]$, which belong to the set ${\rm Rank(\mathcal{A})}$ we define the relation: \\
\begin{center}
${\rm Rk}(A_{1}, ..., A_{k}) \preceq {\rm Rk}(A_{1}', ..., A_{k}')$ $\iff$ for all $i \in \{1, ..., k\}$ we have $a_{i j} \leq a'_{i j}$ for any $j \in \{1, ..., n+1\}$.
\end{center}
It is easy to see that the relation $\preceq$ is a partial order, which is compatible with the dominance in the sense that for a fixed $i \in \{1, ..., k\}$ we have 
$$a_{ij} \leq a_{ij}' \,\, {\rm for} \,\, {\rm all} \,\, j \in \{1, ..., n+1\} \,\, {\rm iff} \,\, A_{i} \prec A_{i}'.$$

\begin{ex}
Let us consider the rank function equation (\ref{rowid}) with $k=2$.

Let $A_{1}, A_{1}', A_{2}, A_{2}',B, B' \in \mathfrak{N}_{10}(\mathbb{F})$ be nilpotent matrices, such that
\begin{eqnarray}
{\rm jp}(A_{1}) & = & (2, 2, 1, 1, 1, 1, 1, 1) \nonumber , \\
{\rm jp}(A_{2}) & = & (3, 2, 1, 1, 1, 1, 1) \nonumber , \\
{\rm jp}(A_{1}') & = & (4, 1, 1, 1, 1, 1, 1) \nonumber , \\
{\rm jp}(A_{2}') & = & (4, 2, 1, 1, 1, 1) \nonumber , \\
{\rm jp}(B) & = & (3, 2, 2, 2, 1) \nonumber , \\
{\rm jp}(B') & = & (4, 4, 2) \nonumber .
\end{eqnarray}
Then $(A_{1}, A_{2}, B)$ and $(A_{1}', A_{2}', B')$ form solutions to the equation (\ref{rowid}) with $S = \mathbb{N}_{0}$. The rank matrices are the following:
$${\rm Rk}(A_{1}, A_{2}, B) = \left( \begin{array}{cccccccccc} 
10 & 2 &  0 & 0 & 0 & 0 & 0 & 0 & 0 & 0 \\
10 & 3 &  1 & 0 & 0 & 0 & 0 & 0 & 0 & 0 \\
10 & 5 &  1 & 0 & 0 & 0 & 0 & 0 & 0 & 0
\end{array} \right),$$
$${\rm Rk}(A_{1}', A_{2}', B') = \left( \begin{array}{cccccccccc} 
10 & 3 &  2 & 1 & 0 & 0 & 0 & 0 & 0 & 0 \\
10 & 4 &  2 & 1 & 0 & 0 & 0 & 0 & 0 & 0 \\
10 & 7 &  4 & 2 & 0 & 0 & 0 & 0 & 0 & 0
\end{array} \right) .$$
Of course, ${\rm Rk}(A_{1}, A_{2}, B) \preceq {\rm Rk}(A_{1}', A_{2}', B')$.
\end{ex}

It is quite easy to see that the set ${\rm Rank(Sol)}$ may not be totally ordered. Indeed,
\begin{ex}
Consider the rank function equation (\ref{rowid}) with $k=2$. Let $A_{1}, A_{1}', A_{2},$ $ A_{2}',B, B' \in \mathfrak{N}_{8}(\mathbb{F})$ be nilpotent matrices, with
\begin{eqnarray}
{\rm jp}(A_{1}) & = & (2, 1, 1, 1, 1, 1, 1) \nonumber , \\
{\rm jp}(A_{2}) & = & (3, 1, 1, 1, 1, 1) \nonumber , \\
{\rm jp}(A_{1}') & = & (2, 2, 1, 1, 1, 1) \nonumber , \\
{\rm jp}(A_{2}') & = & (2, 2, 1, 1, 1, 1) \nonumber , \\
{\rm jp}(B) & = & (3, 2, 1, 1, 1) \nonumber , \\
{\rm jp}(B') & = & (2, 2, 2, 2) \nonumber .
\end{eqnarray}
The triplets $(A_{1}, A_{2}, B)$ and $(A_{1}', A_{2}', B')$ form solutions to equation (\ref{rowid}) with $S = \mathbb{N}_{0}$, and the rank matrices are the following:
$${\rm Rk}(A_{1}, A_{2}, B) = \left( \begin{array}{cccccccc} 
8 & 1 &  0 & 0 & 0 & 0 & 0 & 0 \\
8 & 2 &  1 & 0 & 0 & 0 & 0 & 0  \\
8 & 3 &  1 & 0 & 0 & 0 & 0 & 0 
\end{array} \right),$$
$${\rm Rk}(A_{1}', A_{2}', B') = \left( \begin{array}{cccccccc} 
8 & 2 &  0 & 0 & 0 & 0 & 0 & 0  \\
8 & 2 &  0 & 0 & 0 & 0 & 0 & 0  \\
8 & 4 &  0 & 0 & 0 & 0 & 0 & 0 
\end{array} \right) .$$
We see that ${\rm Rk}(A_{1}, A_{2}, B)$ and ${\rm Rk}(A_{1}', A_{2}', B')$ are not comparable.
\end{ex}

We denote by ${\rm Rk}_{i}$ the $i$-th row of a matrix ${\rm Rk} \in {\rm Rank(Sol)}$. To a fixed matrix ${\rm Rk} \in {\rm Rank(Sol)}$ we associate a sequence of matrices $(M({\rm Rk}_{1}) , ... , M({\rm Rk}_{k+1}))$, such that $M({\rm Rk}_{i}) \in \mathfrak{N}$ is in the Jordan canonical form, which is defined by the vector ${\rm Rk}_{i}$ in the obvious manner. \\

From now on we will work over the field of complex numbers $\mathbb{C}$. \\

We recall a well-known theorem due to Gerstenhaber \cite{Ger}.
\begin{tw}[Gerstenhaber]
\label{gest}
Let $A, B \in M_{n \times n}(\mathbb{C})$ be nilpotent matrices. Then $A \in \overline{\mathcal{O}(B)}$ if and only if $A \prec B$.
\end{tw}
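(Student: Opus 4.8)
The plan is to prove the two implications of Theorem \ref{gest} separately: the direction $A \in \overline{\mathcal{O}(B)} \Rightarrow A \prec B$ by Zariski semicontinuity of rank, and the converse by reducing to elementary two-block degenerations of nilpotent matrices.

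For the forward implication I would first observe that, for a fixed $m \in \mathbb{N}$ and a fixed integer $s$, the locus $\{X \in M_{n \times n}(\mathbb{C}) : {\rm rk}(X^{m}) \geq s\}$ is Zariski-open, since it is the union, over all $s \times s$ submatrices of $X^{m}$, of the non-vanishing loci of the corresponding minors, which are polynomials in the entries of $X$. Hence $\{X : {\rm rk}(X^{m}) \leq s\}$ is Zariski-closed. Taking $s = r_{B}(m)$ and using that conjugation preserves the rank function (Proposition \ref{properties}(v)), this closed set contains $\mathcal{O}(B)$, hence also $\overline{\mathcal{O}(B)}$; letting $m$ run over $\mathbb{N}$ gives $r_{A}(m) \leq r_{B}(m)$ for all $m$, i.e. $A \prec B$.

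For the converse, assume $A \prec B$ and set $\lambda = {\rm jp}(A)$, $\mu = {\rm jp}(B)$, both partitions of $n$ by Proposition \ref{postacjordana}. Using ${\rm rk}(N_{k}^{m}) = \max(k - m, 0)$ together with Proposition \ref{properties}(vi) one finds $r_{A}(m) = \sum_{i} \max(\lambda_{i} - m, 0) = \sum_{j > m} \lambda_{j}^{*}$, where $\lambda^{*}$ is the conjugate partition, and likewise for $B$; since $r_{A}(0) = r_{B}(0) = n$, the hypothesis $A \prec B$ is equivalent to $\mu$ dominating $\lambda$ in the dominance order on partitions of $n$. I would then invoke the classical "box-moving" reduction: whenever $\lambda$ is dominated by $\mu$ there is a finite chain $\mu = \mu^{(0)}, \mu^{(1)}, \dots, \mu^{(r)} = \lambda$ in which each $\mu^{(t+1)}$ is obtained from $\mu^{(t)}$ by decreasing one part and increasing a strictly later part, each by $1$, with every intermediate tuple a genuine partition; in such a step only two Jordan blocks of the canonical form are affected, say of sizes $p \geq q+2$, changed to $p-1$ and $q+1$. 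For one such step I would write down an explicit one-parameter family $C(t) \in \mathfrak{N}_{p+q}(\mathbb{C})$ with $C(t)$ conjugate to $N_{p} \oplus N_{q}$ for $t \neq 0$ and $C(0)$ conjugate to $N_{p-1} \oplus N_{q+1}$; forming the direct sum with the unchanged blocks yields $\mathcal{O}(N_{\mu^{(t+1)}}) \subseteq \overline{\mathcal{O}(N_{\mu^{(t)}})}$. Since the Zariski closure of a closed set is itself, chaining these inclusions along the whole chain gives $\mathcal{O}(N_{\lambda}) \subseteq \overline{\mathcal{O}(N_{\mu})}$; finally $A$ is conjugate to $N_{\lambda}$ and $B$ to $N_{\mu}$, so $\mathcal{O}(A) = \mathcal{O}(N_{\lambda})$ and $\overline{\mathcal{O}(B)} = \overline{\mathcal{O}(N_{\mu})}$, whence $A \in \overline{\mathcal{O}(B)}$.

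The forward implication is routine. The hard part will be the construction of the explicit degeneration $C(t)$ and the verification that its generic fibre has Jordan type $(p,q)$ and its special fibre type $(p-1,q+1)$ — this is the genuine geometric content of Gerstenhaber's theorem, and it is also the step that forces working over an algebraically closed field. The purely combinatorial lemma on box-moving chains in the dominance order is elementary but should be stated carefully, so that every intermediate tuple really is a partition and the affected parts satisfy the required inequality.
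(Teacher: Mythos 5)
The paper does not actually prove Theorem \ref{gest}: it is recalled as a classical result and cited to \cite{Ger}, so there is no internal argument to compare yours against. Judged on its own, your outline is the standard proof and is sound. The forward direction via Zariski-closedness of the loci $\{X : {\rm rk}(X^{m}) \leq s\}$ is complete as written; the translation of $A \prec B$ into dominance of ${\rm jp}(B)$ over ${\rm jp}(A)$ through conjugate partitions is correct (equality at $m=0$ is exactly what makes the two formulations equivalent); and the reduction of a dominance relation to a chain of single box moves with all intermediate tuples partitions is the classical covering lemma for the dominance order, which you are right to cite rather than reprove.

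The only real gap is the step you yourself flag: the family $C(t)$ is asserted, not constructed. It is routine rather than deep. For $p \geq q+2$ take a basis $e_{1}, \dots, e_{p-1}, f_{1}, \dots, f_{q+1}$ on which $N_{p-1} \oplus N_{q+1}$ acts by $e_{i} \mapsto e_{i-1}$, $f_{j} \mapsto f_{j-1}$ (with $e_{0}=f_{0}=0$), and let $C(t)$ agree with this action except that $C(t)f_{q+1} = f_{q} + t\,e_{p-1}$. Then $C(0) = N_{p-1} \oplus N_{q+1}$, while for $t \neq 0$ the image of $C(t)^{m}$ is spanned by $e_{1}, \dots, e_{p-1-m}$, $f_{1}, \dots, f_{q-m}$ and the extra vector $f_{q+1-m} + t\,e_{p-m}$, so $r_{C(t)}(m) = \max(p-m,0) + \max(q-m,0)$; by Proposition \ref{ppp1} this forces $C(t) \in \mathcal{O}(N_{p} \oplus N_{q})$. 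Since $t \mapsto C(t) \oplus (\text{unchanged blocks})$ is a polynomial curve lying in the orbit of the larger partition for $t \neq 0$, its value at $t=0$ lies in the Zariski closure of that orbit, and invariance of orbit closures under conjugation upgrades this to the orbit inclusion needed for each box move; chaining then finishes the argument exactly as you describe. One small correction: nothing here forces algebraic closure --- the Jordan form of a nilpotent matrix exists over any field and the degeneration above only needs the ground field to be infinite; what genuinely uses $\mathbb{C}$ (via constructibility, Lemma \ref{l1}) is the paper's product version, Theorem \ref{gg}, not Theorem \ref{gest} itself.
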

\begin{lem}
\label{l1}
Let $n \in \mathbb{N}\setminus \{0\}$ and let $A, B \subseteq \mathbb{C}^{n}$ be constructible sets. The following equality holds
$$\overline{A} \times \overline{B} = \overline{A \times B}.$$
\end{lem}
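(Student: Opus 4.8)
The plan is to prove the set-theoretic identity $\overline{A}\times\overline{B}=\overline{A\times B}$ for constructible sets $A,B\subseteq\mathbb{C}^n$ by establishing the two inclusions separately, the nontrivial one first. For the inclusion $\overline{A}\times\overline{B}\subseteq\overline{A\times B}$, I would argue in two stages. First, fix a point $b\in B$; since $\{b\}$ is closed, the map $x\mapsto(x,b)$ is continuous in the Zariski topology, and $A\times\{b\}\subseteq A\times B\subseteq\overline{A\times B}$, so taking closures gives $\overline{A}\times\{b\}\subseteq\overline{A\times B}$. This holds for every $b\in B$, hence $\overline{A}\times B\subseteq\overline{A\times B}$. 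Second, now fix a point $a\in\overline{A}$; by the same slicing argument applied to the other factor, $\{a\}\times B\subseteq\overline{A\times B}$ implies $\{a\}\times\overline{B}\subseteq\overline{\{a\}\times B}\subseteq\overline{A\times B}$. Letting $a$ range over $\overline{A}$ yields $\overline{A}\times\overline{B}\subseteq\overline{A\times B}$.

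For the reverse inclusion $\overline{A\times B}\subseteq\overline{A}\times\overline{B}$, I would simply note that $\overline{A}\times\overline{B}$ is a closed set (a product of Zariski-closed sets in $\mathbb{C}^n\times\mathbb{C}^n$ is closed, since it is cut out by the union of the defining polynomials of each factor, regarded as polynomials in the doubled variable set) which contains $A\times B$; hence it contains the closure $\overline{A\times B}$ as well. Combining the two inclusions gives the claimed equality.

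I should remark that the hypothesis that $A$ and $B$ are constructible is not actually needed for this identity — the argument above works for arbitrary subsets of $\mathbb{C}^n$ — but it is the form in which the lemma will be applied (to images of conjugation actions and to conjugacy classes, which are constructible), so I will keep it as stated. The only subtlety worth flagging is that this is purely a statement about the Zariski topology on $\mathbb{C}^n\times\mathbb{C}^n\cong\mathbb{C}^{2n}$, and in particular one must not confuse it with a product-topology statement: the Zariski topology on the product is strictly finer than the product of the Zariski topologies, so the slicing argument (using that each slice $\overline{A}\times\{b\}$ is itself irreducible-or-not but in any case the closure of a slice is the slice of nothing larger) must be carried out using honest continuity of the inclusion maps of slices rather than any appeal to a box-topology heuristic.

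The main obstacle is essentially bookkeeping: making sure the closure operations are taken in the correct ambient space at each step and that the slice maps $\mathbb{C}^n\to\mathbb{C}^{2n}$ are genuinely morphisms (which they are, being affine-linear), so that they are continuous and hence $\overline{f(Z)}\subseteq\overline{f(\overline{Z})}=\overline{f(Z)}$ behaves as expected. There is no deep geometry here — the result is a standard fact — so the proof will be short; the reason it is isolated as a lemma is that it will be combined with the Gerstenhaber theorem (Theorem \ref{gest}) and Proposition \ref{properties}(vi) to deduce the Cartesian-product version of Gerstenhaber's theorem stated in the introduction.
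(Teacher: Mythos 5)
Your proof is correct, but it takes a genuinely different route from the paper. The paper disposes of Lemma~\ref{l1} in one line by invoking the analytic fact that the Zariski and Euclidean closures of a constructible subset of $\mathbb{C}^{n}$ coincide, and then (implicitly) that closures commute with products in the Euclidean (product) topology; this is why the hypotheses ``constructible'' and ``over $\mathbb{C}$'' appear, and why the author later remarks that the proof of Theorem~\ref{gg} breaks over other fields. Your argument stays entirely inside the Zariski topology: the containment $\overline{A\times B}\subseteq\overline{A}\times\overline{B}$ because a product of closed sets is closed, and the reverse containment by the two-step slicing argument, using that the slice inclusions $x\mapsto(x,b)$ and $y\mapsto(a,y)$ are morphisms, hence continuous, so $f(\overline{Z})\subseteq\overline{f(Z)}$. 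This is sound (one small wording slip: the continuity of $x\mapsto(x,b)$ comes from its being a polynomial map, not from $\{b\}$ being closed, as you yourself note later), and it buys more than the paper's proof: it needs neither constructibility nor the field $\mathbb{C}$, so the lemma holds for arbitrary subsets of $\mathbb{F}^{n}$ over any field. In particular your observation undercuts the paper's closing remark of Section~4 --- the failure of this lemma is not actually the obstruction to extending Theorem~\ref{gg} to other algebraically closed fields (the real input there is Gerstenhaber's theorem, which the paper's introduction states over any algebraically closed field anyway). The trade-off is only length: the paper's proof is a one-line citation, yours is a short self-contained argument.
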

\begin{proof} This is an immediate consequence of the well-known fact that the Zariski and the Euclidean closure of a constructible set coincide.
\end{proof}
The following result generalizes Gerstenhaber's theorem.
\begin{tw}
\label{gg}
Let $A_{1}, ..., A_{k}, B_{1}, ..., B_{k} \in \mathfrak{N}$. Then 
\begin{enumerate}\item $(A_{1}, ..., A_{k}) \in \overline{\mathcal{O}(B_{1}) \times ... \times \mathcal{O}(B_{k})}$ iff ${\rm Rk}(A_{1}, ..., A_{k})$ $\preceq$ ${\rm Rk}(B_{1}, ..., B_{k})$

\item $\mathcal{O}(A_{1}) \times ... \times \mathcal{O}(A_{k}) = \mathcal{O}(B_{1}) \times ... \times \mathcal{O}(B_{k})$ iff ${\rm Rk}(A_{1} , ..., A_{k}) = {\rm Rk}(B_{1}, ..., B_{k})$.
\end{enumerate}
\end{tw}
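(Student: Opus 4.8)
The plan is to reduce everything to Gerstenhaber's theorem (Theorem \ref{gest}) via the product lemma (Lemma \ref{l1}). First I would prove part (i). For the forward direction, suppose $(A_1,\ldots,A_k) \in \overline{\mathcal{O}(B_1) \times \cdots \times \mathcal{O}(B_k)}$. Since each $\mathcal{O}(B_j)$ is a constructible subset of $M_{n\times n}(\mathbb{C}) \cong \mathbb{C}^{n^2}$, Lemma \ref{l1} (applied iteratively, or in the obvious $k$-fold form) gives
$$\overline{\mathcal{O}(B_1) \times \cdots \times \mathcal{O}(B_k)} = \overline{\mathcal{O}(B_1)} \times \cdots \times \overline{\mathcal{O}(B_k)},$$
so the hypothesis is equivalent to $A_j \in \overline{\mathcal{O}(B_j)}$ for every $j$. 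By Theorem \ref{gest} this is equivalent to $A_j \prec B_j$ for every $j$, i.e. $r_{A_j}(m) \le r_{B_j}(m)$ for all $m \in \mathbb{N}$ and all $j$. Reading this entrywise across the rows of the rank matrices, and recalling the compatibility of $\preceq$ with the dominance noted just before Example 4.1, this says exactly ${\rm Rk}(A_1,\ldots,A_k) \preceq {\rm Rk}(B_1,\ldots,B_k)$. The reverse direction runs backwards through the same chain of equivalences, so part (i) follows.

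For part (ii), I would first observe that $\mathcal{O}(A_1) \times \cdots \times \mathcal{O}(A_k) = \mathcal{O}(B_1) \times \cdots \times \mathcal{O}(B_k)$ holds if and only if $\mathcal{O}(A_j) = \mathcal{O}(B_j)$ for each $j$ — the nontrivial implication being that equality of products of nonempty sets forces equality of the factors, which is immediate by projecting. By Proposition \ref{ppp1}, $\mathcal{O}(A_j) = \mathcal{O}(B_j)$ (equivalently $A_j = U^{-1}B_jU$ for some $U$) is equivalent to $r_{A_j}(m) = r_{B_j}(m)$ for all $m \in \mathbb{N}$. Ranging over $j$ and reading entrywise, this is precisely the equality of rank matrices ${\rm Rk}(A_1,\ldots,A_k) = {\rm Rk}(B_1,\ldots,B_k)$. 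Alternatively, part (ii) can be derived from part (i) applied to both $\preceq$ directions, using antisymmetry of $\preceq$; I would mention this as a shortcut but the direct argument via Proposition \ref{ppp1} is cleanest.

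The only genuine content is Lemma \ref{l1} and the reduction of the closure of a product to the product of closures, which is already granted. The one point requiring a line of care is the passage from "$A_j \prec B_j$ for all $j$" to "${\rm Rk}(A_1,\ldots,A_k) \preceq {\rm Rk}(B_1,\ldots,B_k)$": this uses that the $j$-th row of a rank matrix is $(r_{A_j}(0), r_{A_j}(1), \ldots, r_{A_j}(n))$ together with property (iv) of Proposition \ref{properties}, namely that once $r_{A_j}$ stabilizes it stays constant — so comparing the finitely many columns $0,\ldots,n$ is equivalent to comparing $r_{A_j}(m)$ for all $m \in \mathbb{N}$. I do not anticipate a real obstacle here; the proof is essentially a bookkeeping assembly of Theorem \ref{gest}, Lemma \ref{l1}, and Proposition \ref{ppp1}.
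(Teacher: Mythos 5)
Your proof is correct and takes essentially the same route as the paper's: part (i) is the same combination of Theorem \ref{gest} with Lemma \ref{l1} to pass between the product of the closures $\overline{\mathcal{O}(B_{1})}\times\cdots\times\overline{\mathcal{O}(B_{k})}$ and the closure of the product, and part (ii) is the same appeal to Proposition \ref{ppp1}, which the paper merely states more tersely. Your added remark that comparing the columns $0,\dots,n$ suffices (since the rank functions of nilpotent $n\times n$ matrices vanish from $m=n$ on) is a harmless bookkeeping point the paper leaves implicit.
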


\begin{proof}
Ad(i). Using Theorem \ref{gest} we have 
\begin{center}
${\rm Rk}(A_{1}, ..., A_{k}) \preceq {\rm Rk}(B_{1}, ..., B_{k})$ if and only if $(A_{1}, ..., A_{k}) \in \overline{\mathcal{O}(B_{1})} \times ... \times \overline{\mathcal{O}(B_{k})}$.
\end{center}
Since ${\rm Rk}(A_{1}, ..., A_{k}) \preceq {\rm Rk}(B_{1}, ..., B_{k})$, thus by Lemma \ref{l1} 
$$(A_{1}, ..., A_{k}) \in \overline{ \mathcal{O}(B_{1}) \times ... \times \mathcal{O}(B_{k})},$$
and the proof of the implication "$\Leftarrow$" in (i) is completed.

Suppose that $(A_{1}, ..., A_{k}) \in \overline{\mathcal{O}(B_{1}) \times ... \times \mathcal{O}(B_{k})}$. By Lemma \ref{l1} we have the equality $$\overline{\mathcal{O}(B_{1})} \times ... \times \overline{\mathcal{O}(B_{k})} = \overline{ \mathcal{O}(B_{1}) \times ... \times \mathcal{O}(B_{k})},$$ which implies that $A_{j} \in \overline{\mathcal{O}(B_{j})}$ for all $j \in \{1, ..., k\}$, and thus $r_{A_{j}} (m) \leq r_{B_{j}} (m)$ for all $m \in \mathbb{N}$. \\
Ad(ii). Implication "$\Rightarrow$" is obvious, other implication is a simple consequence of Proposition \ref{ppp1}.
\end{proof}

Since for constructible sets $A, B \subseteq \mathbb{F}^{n}$, where $\mathbb{F}$ is an arbitrary field, the equality $\overline{A \times B} = \overline{A} \times \overline{B}$ does not hold in general, the proof of Theorem $4.5$ breaks. Nevertheless it would be interesting to know if the result of Theorem $4.5$ remains true. 

\section{Geometry of the set of solutions}

Recall that a set $\mathcal{U}$ is $GL(n, \mathbb{C} )$-invariant if $\mathcal{U}  \supseteq \, \bigcup_{A \in \mathcal{U}}\mathcal{O}(A)$. Thus we see (compare \cite{PS}) that the set ${\rm Sol}$ is $GL(n, \mathbb{C} )$-invariant in the following sense:
$${\rm Sol} \, = \, \bigcup_{(A_{1}, ..., A_{k}, B) \in {\rm Sol}} \mathcal{O}(A_{1}) \times ... \times \mathcal{O}(A_{k}) \times \mathcal{O}(B).$$

Next, wee see that the set ${\rm Sol}$ is a cone, i.e. ${\rm Sol} \neq \emptyset$ and $${\rm Sol} \, \supseteq \, \mathbb{C} {\rm Sol} := \{\lambda(A_{1}, ..., A_{k}, B) : \lambda \in \mathbb{C}, (A_{1}, ..., A_{k}, B) \in {\rm Sol}\}.$$

Recall also (see for example \cite{Sk3}) that for a matrix $A \in \mathfrak{N}$ the dimension of $\overline{\mathcal{O}(A)}$ can be computed by the following formula
$$\dim(\overline{\mathcal{O}(A)}) = n^{2} - \sum_{j=0}^{\infty}(r_{A}(j) - r_{A}(j+1))^2.$$

We denote by ${\rm Sol}_{\rm id}$ the set of all nilpotent solutions to the rank function equation (\ref{rowid}) with $S = \{1, ..., n\}$.

\begin{ex}
It is easy to see that the set $\overline{{\rm Sol}}_{\rm id}$ with $n = 2k$ is irreducible and has the form
$$\overline{{\rm Sol}}_{{\rm id}} = \overline{ \underbrace{\mathcal{O}(N_{2} \oplus O_{2k-2}) \times ... \times \mathcal{O}(N_{2} \oplus O_{2k-2})}_{k} \times \mathcal{O}(\underbrace{N_{2} \oplus ... \oplus N_{2}}_{k})}.$$
The dimension of this set is equal $\dim \, \overline{{\rm Sol}}_{{\rm id}} = 6k^2 - 2k$ (we omit some dull computations).
\end{ex}

\begin{ex} Let us consider the set $\overline{{\rm Sol}}_{\rm id}$ with $n = 2k+1$. It is quite easy to see this set is reducible and has the form

$$\overline{{\rm Sol}}_{\rm id} = \bigcup_{i=1}^{k} \overline{\mathcal{O}(N_{2} \oplus O_{2k-1}) \times ... \times \mathcal{O}(N_{3} \oplus O_{2k-2})^{[i]} \times ... \times \mathcal{O}(N_{2} \oplus O_{2k-1}) \times \mathcal{O}(N_{3} \oplus \underbrace{N_{2} \oplus ... \oplus N_{2}}_{k-1})},
$$
where $[i]$ denotes the $i$-th position, counted from the left-hand side, on which the conjugacy class $\mathcal{O}(N_{3} \oplus O_{2k-2})$ appears. We see that all irreducible components have the same dimension, and $\dim \overline{{\rm Sol}}_{\rm id} = 6k^2 +8k -2$.

\end{ex}

One of the most important consequences of Theorem \ref{gest} is that if $\mathcal{U} \subseteq M_{n \times n}(\mathbb{F})$ is a $GL(n, \mathbb{F})$-invariant set of nilpotent matrices over an algebraic closed field of characteristic zero, then there is a bijective correspondence between irreducible components of $\mathcal{U}$ and maximal elements of the set $R(\mathcal{U}) := \{r_{A} \, : A \in \mathcal{U}\}$ in the sense of order $\prec$. The same can be shown in our case. 

We will follow \cite{Sk2}. We denote by $\mathcal{Z}({\rm Sol} )$ the set of all maximal elements of the set ${\rm Rank(Sol)}$ in the sense of order $\preceq$ and by $\mathcal{I}({\rm Sol})$ the familly of all irreducible components of $\overline{{\rm Sol} }$.

It is well known that conjugacy classes of matrices are irreducible constructible sets, and thus the cartesian products of them are irreducible. By Theorem \ref{gg} and the fact that for each matrix ${\rm Rk} \in {\rm Rank(Sol)}$ there exists an ${\rm Rk}_{0} \in \mathcal{Z}({\rm Sol})$ such that ${\rm Rk} \preceq {\rm Rk}_{0}$ we obtain the following result.

\begin{tw} 
The maps
$$\mathcal{Z}({\rm Sol} ) \ni {\rm Rk} \mapsto \overline{\mathcal{O}(M({\rm Rk}_{1}))\times ... \times \mathcal{O}(M({\rm Rk}_{k+1}))} \in \mathcal{I}({\rm Sol} ),$$
$$\mathcal{I}({\rm Sol} ) \ni W \mapsto \max  {\rm Rank}(W) \in \mathcal{Z}({\rm Sol} )$$
are well defined mutually inverse bijections.
\end{tw}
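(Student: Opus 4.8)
The plan is to establish that the two displayed maps are well-defined, that they land in the claimed target sets, and that they are mutually inverse; the underlying engine is Theorem \ref{gg} together with the standard correspondence between maximal elements of a rank-ordered set and irreducible components of a $GL(n,\mathbb{C})$-invariant set, as recalled just above the statement.

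\medskip

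\emph{Well-definedness of the first map.} Fix ${\rm Rk} \in \mathcal{Z}({\rm Sol})$ and let $(M({\rm Rk}_1), \dots, M({\rm Rk}_{k+1}))$ be the associated tuple of nilpotent matrices in Jordan form. First I would observe that, since ${\rm Rk}$ arises as ${\rm Rk}(A_1,\dots,A_k,B)$ for some $(A_1,\dots,A_k,B) \in {\rm Sol}$, and the rank function determines the conjugacy class of a nilpotent matrix (Proposition \ref{ppp1}), each $M({\rm Rk}_i)$ is conjugate to the corresponding entry of that solution; hence the tuple $(M({\rm Rk}_1),\dots,M({\rm Rk}_{k+1}))$ itself lies in ${\rm Sol}$ (using $GL$-invariance of ${\rm Sol}$). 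Therefore $\overline{\mathcal{O}(M({\rm Rk}_1)) \times \dots \times \mathcal{O}(M({\rm Rk}_{k+1}))} \subseteq \overline{{\rm Sol}}$. It is an irreducible closed subset, being the closure of a product of conjugacy classes, which are irreducible. To see it is an irreducible \emph{component} of $\overline{{\rm Sol}}$, suppose it were strictly contained in some irreducible closed $Z \subseteq \overline{{\rm Sol}}$; picking a tuple in a dense open subset of $Z$ that is a genuine solution and comparing rank matrices via Theorem \ref{gg}(i) would produce an element of ${\rm Rank(Sol)}$ strictly above ${\rm Rk}$, contradicting maximality. So the first map lands in $\mathcal{I}({\rm Sol})$.

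\medskip

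\emph{Well-definedness of the second map.} For $W \in \mathcal{I}({\rm Sol})$ I would argue that the finite set ${\rm Rank}(W) := \{{\rm Rk}(C_1,\dots,C_{k+1}) : (C_1,\dots,C_{k+1}) \in W \cap {\rm Sol}\}$ has a unique $\preceq$-maximum. Existence of a maximal element is automatic since the set is finite; uniqueness follows because $W$ is irreducible: if ${\rm Rk}'$ and ${\rm Rk}''$ were two distinct maximal rank matrices realized on $W$, the corresponding preimages would be two disjoint nonempty constructible subsets of $W$ each containing a dense piece (the locus of a fixed rank matrix is locally closed and $GL$-invariant), and by Theorem \ref{gg}(i) together with Lemma \ref{l1} one of them would have to lie in the closure of the other, forcing comparability of ${\rm Rk}'$ and ${\rm Rk}''$. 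Thus $\max {\rm Rank}(W)$ is well defined; it lies in $\mathcal{Z}({\rm Sol})$ because any rank matrix in ${\rm Rank(Sol)}$ lying above it would, by the component structure, force $W$ to be strictly contained in a larger irreducible subset, contradicting that $W$ is a component.

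\medskip

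\emph{Mutual inversion.} Starting from ${\rm Rk} \in \mathcal{Z}({\rm Sol})$, the component $W = \overline{\mathcal{O}(M({\rm Rk}_1)) \times \dots \times \mathcal{O}(M({\rm Rk}_{k+1}))}$ realizes ${\rm Rk}$ at its generic point, and by Theorem \ref{gg}(i) every tuple in $W \cap {\rm Sol}$ has rank matrix $\preceq {\rm Rk}$; hence $\max {\rm Rank}(W) = {\rm Rk}$. Conversely, starting from a component $W$ with $\max {\rm Rank}(W) = {\rm Rk}$, the set $\overline{\mathcal{O}(M({\rm Rk}_1)) \times \dots \times \mathcal{O}(M({\rm Rk}_{k+1}))}$ is, by Theorem \ref{gg}(i), exactly the closure of the locus in ${\rm Sol}$ where the rank matrix equals ${\rm Rk}$, which is dense in $W$; so it equals $W$. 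This closes the loop in both directions. The main obstacle I anticipate is the uniqueness of the maximal rank matrix on an irreducible component: it is precisely here that one needs to invoke the coincidence of Zariski and Euclidean closures for constructible sets (Lemma \ref{l1}) and the precise shape of Theorem \ref{gg}, rather than any soft dimension-counting argument, and one must be careful that the locus of a prescribed rank matrix is constructible and $GL$-invariant so that its closure behaves as expected.
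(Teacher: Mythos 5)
Your overall strategy is the same as the paper's: everything is driven by Theorem \ref{gg} (plus Gerstenhaber, Proposition \ref{ppp1}, $GL$-invariance of ${\rm Sol}$, finiteness of ${\rm Rank(Sol)}$ and the existence of dominating maximal elements), which is exactly the one-line derivation the paper indicates, following \cite{Sk2}. However, two of the steps you interpolate do not work as written, and both trace back to the same missing structural fact, namely that $\overline{{\rm Sol}}$ is the \emph{finite} union, over ${\rm Rk} \in \mathcal{Z}({\rm Sol})$, of the irreducible closed sets $\overline{\mathcal{O}(M({\rm Rk}_{1})) \times ... \times \mathcal{O}(M({\rm Rk}_{k+1}))}$ (equivalently: every irreducible component of $\overline{{\rm Sol}}$ is the closure of a single rank-matrix locus). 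This follows because ${\rm Sol}$ is the finite union of the products of conjugacy classes attached to the elements of ${\rm Rank(Sol)}$, so its closure is the corresponding finite union of closures, which by Theorem \ref{gg}(i) can be reduced to the maximal ${\rm Rk}$'s; once this is stated, well-definedness in both directions and the inversion are immediate, which is presumably why the paper does not spell them out.

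Concretely: (a) in your first-map argument you take an arbitrary irreducible closed $Z \subseteq \overline{{\rm Sol}}$ strictly containing the candidate component and pick ``a tuple in a dense open subset of $Z$ that is a genuine solution''; for arbitrary such $Z$ the set $Z \cap {\rm Sol}$ need not be dense in $Z$ (it can even be empty if $Z$ sits in the boundary), so this step is unjustified unless you first replace $Z$ by an irreducible component, where $Z = \overline{Z \cap {\rm Sol}}$ holds, and then use the finite decomposition of $Z \cap {\rm Sol}$ into rank loci together with irreducibility to find a single ${\rm Rk}'$ with $Z \subseteq \overline{\mathcal{O}(M({\rm Rk}'_{1})) \times ... \times \mathcal{O}(M({\rm Rk}'_{k+1}))}$, whence ${\rm Rk} \preceq {\rm Rk}'$ by Theorem \ref{gg}(i) and maximality gives the contradiction. (b) Your uniqueness argument for $\max {\rm Rank}(W)$ is incoherent as stated: two \emph{disjoint} constructible subsets of an irreducible $W$ cannot both be dense, and the assertion that ``one of them would have to lie in the closure of the other'' does not follow from the premises you list; comparability of two maximal realized rank matrices is exactly what has to be proved. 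The correct argument is again the decomposition one: $W \cap {\rm Sol}$ is a finite union of rank loci, $W$ is the closure of $W \cap {\rm Sol}$, so irreducibility forces $W$ to be the closure of one locus, say of ${\rm Rk}_{0}$, and then Theorem \ref{gg}(i) shows every rank matrix realized on $W$ is $\preceq {\rm Rk}_{0}$, giving a greatest (hence unique maximal) element. With these repairs your proof is correct and coincides with the paper's intended argument.
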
 

\begin{col}
\label{11} For the set ${\rm Sol} \subset \underbrace{\mathfrak{N} \times ... \times \mathfrak{N}}_{k+1}$ the following conditions are equivalent:
\begin{itemize}
\item $\overline{{\rm Sol}}$ is an irreducible set,
\item there is a greatest element in ${\rm Rank(Sol)}$ with respect to $\preceq$.
\end{itemize}
\end{col}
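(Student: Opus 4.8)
The plan is to deduce this as a formal consequence of the bijection established in the preceding theorem. First I would observe that ${\rm Sol}$ is a nonempty subset of the affine space $(M_{n \times n}(\mathbb{C}))^{k+1} \cong \mathbb{C}^{(k+1)n^{2}}$, which is a Noetherian topological space; hence $\overline{{\rm Sol}}$ has only finitely many irreducible components, and $\overline{{\rm Sol}}$ is irreducible if and only if it has exactly one of them, i.e.\ $|\mathcal{I}({\rm Sol})| = 1$. By the preceding theorem the two displayed maps between $\mathcal{Z}({\rm Sol})$ and $\mathcal{I}({\rm Sol})$ are mutually inverse bijections, so $|\mathcal{I}({\rm Sol})| = 1$ is equivalent to $|\mathcal{Z}({\rm Sol})| = 1$, that is, to ${\rm Rank(Sol)}$ possessing a unique maximal element with respect to $\preceq$.

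Next I would check that, for the finite poset $({\rm Rank(Sol)}, \preceq)$, having a unique maximal element is the same as having a greatest element. The implication from ``greatest'' to ``unique maximal'' is immediate. For the converse, recall the remark preceding the theorem above: since ${\rm Rank(Sol)}$ is finite, every ${\rm Rk} \in {\rm Rank(Sol)}$ satisfies ${\rm Rk} \preceq {\rm Rk}_{0}$ for some ${\rm Rk}_{0} \in \mathcal{Z}({\rm Sol})$. If $\mathcal{Z}({\rm Sol}) = \{{\rm Rk}_{0}\}$ is a singleton, this forces ${\rm Rk} \preceq {\rm Rk}_{0}$ for every ${\rm Rk} \in {\rm Rank(Sol)}$, so ${\rm Rk}_{0}$ is the greatest element. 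Chaining the two equivalences yields the statement.

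There is essentially no obstacle here, since the content is carried entirely by the preceding theorem; the only two points that deserve an explicit word are (a) that the ambient space is Noetherian, so that being irreducible and having a single irreducible component coincide for the nonempty set $\overline{{\rm Sol}}$, and (b) the elementary finite-poset fact that a unique maximal element is automatically the greatest one --- which would fail in the absence of a chain condition but holds here precisely because ${\rm Rank(Sol)}$ is finite.
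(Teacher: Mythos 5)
Your argument is correct and is exactly the intended deduction: the paper gives no separate proof of this corollary, treating it as an immediate consequence of the bijection $\mathcal{Z}({\rm Sol}) \leftrightarrow \mathcal{I}({\rm Sol})$, which is precisely what you use. Your two explicit supporting observations (finitely many irreducible components in the Noetherian ambient space, and the finite-poset fact that a unique maximal element is the greatest element, via the remark that every ${\rm Rk}$ lies below some element of $\mathcal{Z}({\rm Sol})$) are the right points to make and fill in what the paper leaves implicit.
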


\section{The linear capacity}
At the end of the note we would like to formulate some remarks about the linear capacity of solution sets, which will be denoted by $\Lambda(\overline{{\rm Sol}})$. 

\begin{df}
Let $\mathcal{E} \subset M_{n \times n}(\mathbb{F})$, where $\mathbb{F}$ is an algebraically closed field of characteristic zero. The linear capacity $\Lambda(\mathcal{E}) \in \mathbb{N} \cup \{-\infty\}$ is defined by the formula
$$\Lambda(\mathcal{E}) = \max \{ \dim \, \mathcal{L} \, : \, \mathcal{L} \,\, {\rm is} \,\, {\rm linear} \,\, {\rm subspace} \,\, {\rm of} \,\, M_{n \times n}(\mathbb{F}), \,\, L \subseteq \overline{\mathcal{E}} \}.$$
\end{df}
\begin{pp} Let $\mathcal{E}_{1}, \mathcal{E}_{2} \subseteq M_{n \times n}(\mathbb{F})$, with $\mathbb{F}$ as above, be such that $\overline{\mathcal{E}_{1}}, \overline{\mathcal{E}_{2}}$ are irreducible algebraic cones. Then
\begin{enumerate}
\item $\Lambda(\overline{\mathcal{E}_{1}}) = \Lambda(\mathcal{E}_{1})$.
\item $\Lambda(\mathcal{E}_{1} \times \mathcal{E}_{2}) = \Lambda(\mathcal{E}_{1}) + \Lambda(\mathcal{E}_{2})$
\end{enumerate}
\end{pp}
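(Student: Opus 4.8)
The plan is to deduce (i) straight from the definition and then bootstrap (ii) from it together with the fact that Zariski closure commutes with Cartesian products.

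\emph{Part (i).} By construction the quantity $\Lambda(\mathcal{E})$ depends only on the Zariski closure $\overline{\mathcal{E}}$, since the condition imposed on a competing linear subspace $\mathcal{L}$ in the definition is precisely $\mathcal{L}\subseteq\overline{\mathcal{E}}$. As $\overline{\overline{\mathcal{E}_1}}=\overline{\mathcal{E}_1}$, the equality $\Lambda(\overline{\mathcal{E}_1})=\Lambda(\mathcal{E}_1)$ follows at once. The hypotheses that $\overline{\mathcal{E}_1}$ be an irreducible algebraic cone play no role here beyond guaranteeing $0\in\overline{\mathcal{E}_1}$, so that the set over which the maximum is taken is nonempty and $\Lambda(\mathcal{E}_1)\geq 0$.

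\emph{Part (ii).} Using (i) I first replace $\mathcal{E}_1\times\mathcal{E}_2$ by its closure, and then I invoke the identity $\overline{\mathcal{E}_1\times\mathcal{E}_2}=\overline{\mathcal{E}_1}\times\overline{\mathcal{E}_2}$. Over $\mathbb{C}$ this is Lemma~\ref{l1} applied to the constructible sets $\overline{\mathcal{E}_1},\overline{\mathcal{E}_2}$, combined with the evident chain of inclusions $\mathcal{E}_1\times\mathcal{E}_2\subseteq\overline{\mathcal{E}_1}\times\overline{\mathcal{E}_2}\subseteq\overline{\mathcal{E}_1\times\mathcal{E}_2}$; over a general algebraically closed field of characteristic zero one argues instead by the purely algebraic slice argument (for any subsets $A\subseteq\mathbb{F}^m$, $B\subseteq\mathbb{F}^n$ one has $\{a\}\times\overline{B}\subseteq\overline{A\times B}$ for each $a\in A$ because $\{a\}\times\mathbb{F}^n$ is a closed subspace homeomorphic to $\mathbb{F}^n$, hence $A\times\overline{B}\subseteq\overline{A\times B}$, and then symmetrically $\overline{A}\times\overline{B}\subseteq\overline{A\times B}$). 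Thus it suffices to prove $\Lambda(\overline{\mathcal{E}_1}\times\overline{\mathcal{E}_2})=\Lambda(\overline{\mathcal{E}_1})+\Lambda(\overline{\mathcal{E}_2})$, where $\overline{\mathcal{E}_1}\times\overline{\mathcal{E}_2}$ is viewed as a subset of the vector space $M_{n\times n}(\mathbb{F})\times M_{n\times n}(\mathbb{F})$.

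For the inequality ``$\geq$'', choose linear subspaces $\mathcal{L}_i\subseteq\overline{\mathcal{E}_i}$ with $\dim\mathcal{L}_i=\Lambda(\mathcal{E}_i)$; then $\mathcal{L}_1\times\mathcal{L}_2$ is a linear subspace of $\overline{\mathcal{E}_1}\times\overline{\mathcal{E}_2}$ of dimension $\Lambda(\mathcal{E}_1)+\Lambda(\mathcal{E}_2)$. For ``$\leq$'', let $\mathcal{L}\subseteq\overline{\mathcal{E}_1}\times\overline{\mathcal{E}_2}$ be an arbitrary linear subspace and let $\pi_1,\pi_2$ denote the two coordinate projections, which are linear maps of matrix spaces. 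Then each $\pi_i(\mathcal{L})$ is a linear subspace contained in $\pi_i(\overline{\mathcal{E}_1}\times\overline{\mathcal{E}_2})=\overline{\mathcal{E}_i}$ (using that both factors are nonempty), so $\dim\pi_i(\mathcal{L})\leq\Lambda(\mathcal{E}_i)$; since $\mathcal{L}\subseteq\pi_1(\mathcal{L})\times\pi_2(\mathcal{L})$ we obtain $\dim\mathcal{L}\leq\dim\pi_1(\mathcal{L})+\dim\pi_2(\mathcal{L})\leq\Lambda(\mathcal{E}_1)+\Lambda(\mathcal{E}_2)$. Taking the maximum over $\mathcal{L}$ yields the claim.

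The only non-formal ingredient is the identity $\overline{\mathcal{E}_1\times\mathcal{E}_2}=\overline{\mathcal{E}_1}\times\overline{\mathcal{E}_2}$; everything else is elementary linear algebra. The mild subtlety to be careful about is that the proof of Lemma~\ref{l1} rests on the coincidence of the Euclidean and Zariski closures, which is not available over an arbitrary algebraically closed field, so for the general statement one must fall back on the slice argument indicated above. I do not expect any genuine obstacle, provided one keeps track of the ambient vector space $M_{n\times n}(\mathbb{F})\times M_{n\times n}(\mathbb{F})$ in which the linear subspaces competing for $\Lambda(\mathcal{E}_1\times\mathcal{E}_2)$ are taken.
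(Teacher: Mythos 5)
The paper states this proposition without any proof, so there is no argument of the author's to compare yours against; judged on its own, your proof is correct and complete. Part (i) is indeed immediate, since $\Lambda$ is defined through the Zariski closure and $\overline{\overline{\mathcal{E}_{1}}}=\overline{\mathcal{E}_{1}}$; the cone hypothesis only guarantees $0\in\overline{\mathcal{E}_{i}}$, so the maxima are attained and $\neq-\infty$, exactly as you note. For part (ii) both of your ingredients are sound: the identity $\overline{\mathcal{E}_{1}\times\mathcal{E}_{2}}=\overline{\mathcal{E}_{1}}\times\overline{\mathcal{E}_{2}}$, which your slice argument establishes for Zariski closures over an arbitrary field (so you do not need Lemma \ref{l1} or the passage through Euclidean closures at all -- incidentally, this also shows that the paper's later caveat, that this identity may fail over other fields, is not an obstruction as far as Zariski closures of arbitrary subsets of affine space are concerned), and the elementary sandwich $\mathcal{L}\subseteq\pi_{1}(\mathcal{L})\times\pi_{2}(\mathcal{L})$ with $\pi_{i}(\mathcal{L})\subseteq\overline{\mathcal{E}_{i}}$ for the upper bound, with $\mathcal{L}_{1}\times\mathcal{L}_{2}$ giving the lower bound. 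You are also right to flag the ambient space: $\Lambda(\mathcal{E}_{1}\times\mathcal{E}_{2})$ must be read with linear subspaces of $M_{n\times n}(\mathbb{F})\times M_{n\times n}(\mathbb{F})$, which is how the paper uses it later for ${\rm Sol}$. Worth observing, finally, that your argument never uses irreducibility of $\overline{\mathcal{E}_{i}}$ (a linear subspace of $\overline{\mathcal{E}_{i}}$ bounds its dimension by $\Lambda(\mathcal{E}_{i})$ regardless), only nonemptiness and $0\in\overline{\mathcal{E}_{i}}$, so you in fact prove slightly more than is stated.
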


\begin{tw}[Gerstenhaber, Chavey - Brualdi] Let $\mathbb{F}$ be an algebraically closed field of characteristic zero.
If $B \in \mathfrak{N}_{n}(\mathbb{F})$ is a nilpotent matrix, then 
$$\Lambda(\mathcal{O}(B)) = \frac{1}{2} \dim(\overline{\mathcal{O}(B)}).$$
\end{tw}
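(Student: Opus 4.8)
\medskip
\noindent By Theorem~\ref{gest} we have $\overline{\mathcal{O}(B)} = \{ A \in \mathfrak{N}_{n}(\mathbb{F}) : A \prec B \}$, so by definition $\Lambda(\mathcal{O}(B))$ is the largest dimension of a linear subspace $\mathcal{L} \subseteq M_{n \times n}(\mathbb{F})$ all of whose members are dominated by $B$. The plan is to prove the two inequalities separately. Throughout write ${\rm jp}(B) = (k_{1}, ..., k_{\ell})$ and let $\mu = (\mu_{1}, ..., \mu_{p})$ be the conjugate partition, so that $p = k_{1}$, $\mu_{i} = \#\{ j : k_{j} \geq i \}$, $\mu_{1} + ... + \mu_{p} = n$, $\mu_{m+1} = r_{B}(m) - r_{B}(m+1)$, and $r_{B}(m) = \mu_{m+1} + ... + \mu_{p}$ for all $m \geq 0$; these identities are immediate from $r_{B}(m) = \sum_{j} \max(k_{j} - m, 0)$.

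For the inequality $\Lambda(\mathcal{O}(B)) \geq \tfrac{1}{2}\dim\overline{\mathcal{O}(B)}$ I would exhibit an explicit linear space of the right dimension. Fix a direct sum decomposition $\mathbb{F}^{n} = V_{1} \oplus ... \oplus V_{p}$ with $\dim V_{i} = \mu_{i}$ and let $\mathcal{L}_{0}$ be the space of all $A$ with $A(V_{i}) \subseteq V_{1} \oplus ... \oplus V_{i-1}$ for every $i$ (the strictly block upper triangular matrices for this flag of blocks). Then $\dim \mathcal{L}_{0} = \sum_{i < j} \mu_{i} \mu_{j}$, and each $A \in \mathcal{L}_{0}$ kills $V_{1} \oplus ... \oplus V_{m}$ under $A^{m}$, so that $r_{A}(m) \leq n - (\mu_{1} + ... + \mu_{m}) = \mu_{m+1} + ... + \mu_{p} = r_{B}(m)$; hence $A \prec B$ and, by Theorem~\ref{gest}, $A \in \overline{\mathcal{O}(B)}$. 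Therefore $\mathcal{L}_{0} \subseteq \overline{\mathcal{O}(B)}$, and
$$\dim \mathcal{L}_{0} = \sum_{i<j}\mu_{i}\mu_{j} = \tfrac{1}{2}\Big( \big( \textstyle\sum_{i}\mu_{i} \big)^{2} - \sum_{i}\mu_{i}^{2} \Big) = \tfrac{1}{2}\Big( n^{2} - \sum_{m \geq 0}\big( r_{B}(m) - r_{B}(m+1) \big)^{2} \Big) = \tfrac{1}{2}\dim \overline{\mathcal{O}(B)},$$
by the dimension formula recalled above. (For $B = N_{n}$ this $\mathcal{L}_{0}$ is just the space of strictly upper triangular matrices and one recovers Gerstenhaber's classical bound $\Lambda(\mathfrak{N}_{n}) = \binom{n}{2}$.)

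For the reverse inequality, let $\mathcal{L} \subseteq \overline{\mathcal{O}(B)}$ be a linear subspace. First I would reduce to the case where $\mathcal{L} \cap \mathcal{O}(B)$ is dense in $\mathcal{L}$: the matrices of maximal Jordan type in $\mathcal{L}$ (say of type $C$, necessarily with $C \prec B$) form an open dense subset of $\mathcal{L}$, so $\mathcal{L} \subseteq \overline{\mathcal{O}(C)}$ with $\dim \overline{\mathcal{O}(C)} \leq \dim \overline{\mathcal{O}(B)}$, and it suffices to treat $C$ in place of $B$. Now fix $A \in \mathcal{L} \cap \mathcal{O}(B)$; it is a smooth point of $\overline{\mathcal{O}(B)}$ with $T_{A}\overline{\mathcal{O}(B)} = \{ AZ - ZA : Z \in M_{n \times n}(\mathbb{F}) \}$ of dimension $\dim \overline{\mathcal{O}(B)}$, and since every line $A + \mathbb{F}w$ with $w \in \mathcal{L}$ lies in $\overline{\mathcal{O}(B)}$ we get $\mathcal{L} \subseteq T_{A}\overline{\mathcal{O}(B)}$. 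Equip this tangent space with the Kostant--Kirillov--Souriau symplectic form $\omega_{A}(AX - XA,\, AY - YA) = {\rm tr}\big( A(XY - YX) \big)$, which is well defined and nondegenerate (in particular $\dim \overline{\mathcal{O}(B)}$ is even). The target is then to prove that $\mathcal{L}$ is \emph{isotropic} for $\omega_{A}$: once this is known, $\dim \mathcal{L} \leq \tfrac{1}{2}\dim T_{A}\overline{\mathcal{O}(B)} = \tfrac{1}{2}\dim \overline{\mathcal{O}(B)}$, and combining with the lower bound gives the asserted equality.

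The isotropy of $\mathcal{L}$ is the step I expect to be the real obstacle; it is the technical core of the theorem and the point where the arguments of Gerstenhaber and of Chavey--Brualdi enter. The approach I would take exploits that $\overline{\mathcal{O}(B)}$ is a cone: $\omega$ is homogeneous of degree $1$ under scaling, so Cartan's formula applied to the Euler vector field $E$ (with $E_{A} = A$) gives $\omega = d(\iota_{E}\omega)$ on the smooth locus $\mathcal{O}(B)$, and one then shows that the pullback of $\omega$ to the affine-linear subspace $\mathcal{L} \cap \mathcal{O}(B)$ vanishes identically. The essential inputs are that along a family $g(s,t) \in GL(n,\mathbb{F})$ realizing $g^{-1}Ag = A + su + tv$ — which exists near the origin by smoothness of the orbit — the conjugate $g^{-1}Ag$ is a \emph{linear} function of $(s,t)$, together with the trace identity ${\rm tr}\big( M(NP - PN) \big) = 0$ valid for $M, N, P$ in any linear space of nilpotent matrices (itself obtained by polarizing ${\rm tr}(M^{3}) = 0$). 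Alternatively, one may simply invoke the cited theorem of Gerstenhaber and Chavey--Brualdi for the bound $\dim \mathcal{L} \leq \tfrac{1}{2}\dim \overline{\mathcal{O}(B)}$ and feed in the lower bound constructed above to get equality.
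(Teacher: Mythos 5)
The paper offers no proof of this statement at all: it is quoted as a known theorem, attributed to \cite{Ger} and \cite{BC}, so there is no argument of the paper's to compare yours with. Judged on its own merits, your lower bound is correct and complete: the block strictly upper triangular space $\mathcal{L}_{0}$ built from the conjugate partition $\mu$ of ${\rm jp}(B)$ does satisfy $r_{A}(m)\leq \mu_{m+1}+\cdots+\mu_{p}=r_{B}(m)$ for every $A\in\mathcal{L}_{0}$, hence $\mathcal{L}_{0}\subseteq\overline{\mathcal{O}(B)}$ by Theorem~\ref{gest}, and $\dim\mathcal{L}_{0}=\sum_{i<j}\mu_{i}\mu_{j}=\tfrac{1}{2}\bigl(n^{2}-\sum_{m\geq 0}(r_{B}(m)-r_{B}(m+1))^{2}\bigr)$ matches the orbit dimension formula recalled in Section 5. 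The preparatory reductions for the converse (passing to the generic Jordan type $C$ of $\mathcal{L}$, the inclusion $\mathcal{L}\subseteq T_{A}\overline{\mathcal{O}(C)}$ at a smooth point $A\in\mathcal{L}\cap\mathcal{O}(C)$, nondegeneracy of the Kostant--Kirillov--Souriau form) are also sound.

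The upper bound, however, has a genuine gap, and you flag it yourself: the isotropy of $\mathcal{L}$ is exactly the content of the cited theorem, and your sketch does not establish it. Concretely: (a) polarizing ${\rm tr}(M^{3})=0$ on a linear space of nilpotent matrices yields only the symmetric identity ${\rm tr}\bigl(M(NP+PN)\bigr)=0$; the commutator identity ${\rm tr}\bigl(M(NP-PN)\bigr)=0$ is the alternating part of the trilinear form ${\rm tr}(MNP)$ and is not controlled by vanishing of power traces; (b) even granting ${\rm tr}(M[N,P])=0$ for $M,N,P\in\mathcal{L}$, it would not give isotropy, since $\omega_{A}(u,v)={\rm tr}(A[X,Y])={\rm tr}(uY)$ involves the preimages $X,Y$ of $u,v$ under ${\rm ad}\,A$, which need not lie in $\mathcal{L}$ nor be expressible through it; (c) the Cartan-formula remark about the Euler vector field does not by itself force the pullback of $\omega$ to $\mathcal{L}\cap\mathcal{O}(C)$ to vanish. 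Your fallback of simply ``invoking the theorem of Gerstenhaber and Chavey--Brualdi'' for the inequality $\dim\mathcal{L}\leq\tfrac{1}{2}\dim\overline{\mathcal{O}(B)}$ is circular as a proof of this statement (though it coincides with what the paper itself does, namely cite the result). To close the gap you would need an actual proof that a linear subspace through $A$ contained in $\overline{\mathcal{O}(C)}$ is $\omega_{A}$-isotropic, i.e.\ the argument of the cited sources, not a consequence of the nilpotency trace identities alone.
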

\begin{ex}
We compute the linear capacity of $\overline{{\rm Sol}}_{\rm id} \subseteq \underbrace{M_{n \times n}(\mathbb{C}) \times ... \times M_{n \times n}(\mathbb{C})}_\text{k+1}$ with $n=2k$, namely
\begin{center}
$\Lambda(\overline{{\rm Sol}}_{\rm id}) = \Lambda({\rm Sol}_{\rm id}) = \Lambda(\mathcal{O}(A_{1}) \times ... \times \mathcal{O}(A_{k}) \times \mathcal{O}(B)) = \Lambda(\mathcal{O}(A_{1})) + ... + \Lambda(\mathcal{O}(A_{k})) + \Lambda(\mathcal{O}(B)) = (\dim \overline{ \mathcal{O}(A_{1})} + ... +  \dim \overline{ \mathcal{O}(A_{k})} + \dim \overline{ \mathcal{O}(B)})/2 = 3k^2 - k.$
\end{center}
\end{ex}
All these facts and reducibility of the set $\overline{{\rm Sol}}$ lead to the following modification of the above definition of linear capacity
\begin{center}
$\Lambda(\overline{{\rm Sol}}) = \max \{\dim \mathcal{L} \, : \, \mathcal{L} \,\, {\rm is} \,\, {\rm linear} \,\, {\rm subspace} \,\, {\rm of} \,\, \underbrace{M_{n \times n}(\mathbb{C}) \times ... \times M_{n \times n}(\mathbb{C})}_\text{k+1}, \newline \mathcal{L} \subseteq \overline{\mathcal{O}(A_{1}^{i}) \times ... \times \mathcal{O}(A_{k}^{i}) \times \mathcal{O}(B^{i})}, i \in I \},$
\end{center}
where $I \subset \mathbb{N}_{0}$ is such that $\# \, I$ is equal to the number of irreducible components of $\overline{{\rm Sol}}$. Of course, if $\overline{{\rm Sol}}$ is irreducible, then by Corollary \ref{11} the above maximum of dimension can be attained. Moreover, it is quite easy to find an upper bound of $\Lambda(\overline{{\rm Sol}})$.
\begin{pp}
\label{ppp} For the set ${\overline{\rm Sol}}$ there exist matrices $C_{1}, ..., C_{k}, D \in \mathfrak{N}$ such that 
$$\overline{{\rm Sol}} = \, \bigcup_{(A_{1}, ..., A_{k}, B) \in {\rm Sol}} \overline{ \mathcal{O}(A_{1}) \times ... \times \mathcal{O}(A_{k}) \times \mathcal{O}(B)} \subseteq \overline{\mathcal{O}(C_{1}) \times ... \times \mathcal{O}(C_{k}) \times \mathcal{O}(D)} \subsetneq \mathfrak{N}^{k+1}.$$
\end{pp}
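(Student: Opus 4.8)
The plan is to handle the displayed equality and the two inclusions separately. The equality is the soft part: by the $GL(n,\mathbb{C})$-invariance recorded above, ${\rm Sol}$ equals $\bigcup_{(A_{1},\dots,A_{k},B)\in{\rm Sol}}\mathcal{O}(A_{1})\times\dots\times\mathcal{O}(A_{k})\times\mathcal{O}(B)$, and since $M_{n\times n}(\mathbb{C})$ has only finitely many nilpotent conjugacy classes (one per partition of $n$), only finitely many distinct products $\mathcal{O}(A_{1})\times\dots\times\mathcal{O}(B)$ can occur here; as the Zariski closure commutes with finite unions, this gives $\overline{{\rm Sol}}=\bigcup_{(A_{1},\dots,A_{k},B)\in{\rm Sol}}\overline{\mathcal{O}(A_{1})\times\dots\times\mathcal{O}(A_{k})\times\mathcal{O}(B)}$. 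For the middle inclusion the strategy is to produce explicit nilpotent matrices $C_{1},\dots,C_{k},D\in\mathfrak{N}$ such that, in \emph{every} non-trivial solution, $A_{i}\prec C_{i}$ for each $i$ and $B\prec D$. Granting this, Gerstenhaber's Theorem \ref{gest} gives $A_{i}\in\overline{\mathcal{O}(C_{i})}$ and $B\in\overline{\mathcal{O}(D)}$, hence $\overline{\mathcal{O}(A_{i})}\subseteq\overline{\mathcal{O}(C_{i})}$ and $\overline{\mathcal{O}(B)}\subseteq\overline{\mathcal{O}(D)}$; then Lemma \ref{l1} converts the product inclusion $\overline{\mathcal{O}(A_{1})}\times\dots\times\overline{\mathcal{O}(B)}\subseteq\overline{\mathcal{O}(C_{1})}\times\dots\times\overline{\mathcal{O}(D)}$ into $\overline{\mathcal{O}(A_{1})\times\dots\times\mathcal{O}(B)}\subseteq\overline{\mathcal{O}(C_{1})\times\dots\times\mathcal{O}(D)}$, and taking the (finite) union over all solutions yields the inclusion.

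The key point is that in every non-trivial solution none of $A_{1},\dots,A_{k}$ is conjugate to the regular nilpotent $N_{n}$; equivalently, the Jordan partition of each $A_{i}$ has at least two parts. To see this, suppose $A_{i_{0}}$ were conjugate to $N_{n}$. Evaluating equation (\ref{r1}) at $m=1$ and using that $f$ is strictly increasing, integer-valued and $f(0)=0$ — whence $f(t)\ge t$ for all $t\in\mathbb{N}$, and $f(t)=0$ only for $t=0$ — we obtain $r_{B}(1)=\sum_{i=1}^{k}f(r_{A_{i}}(1))\ge f(r_{A_{i_{0}}}(1))=f(n-1)\ge n-1$. But $B$ is a nilpotent $n\times n$ matrix, so $r_{B}(1)\le n-1$, forcing equality throughout; in particular $f(r_{A_{i}}(1))=0$, hence $r_{A_{i}}(1)=0$, hence $A_{i}=O_{n}$ for every $i\neq i_{0}$. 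Since $k\ge 2$ this violates non-triviality. Consequently each $A_{i}$ has largest Jordan part at most $n-1$, and since the partition $(n-1,1)$ dominates every partition of $n$ with at least two parts, a direct check gives $r_{A_{i}}(m)\le r_{N_{n-1}\oplus N_{1}}(m)$ for all $m$, i.e. $A_{i}\prec N_{n-1}\oplus N_{1}$.

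I would then take $C_{1}=\dots=C_{k}=N_{n-1}\oplus N_{1}$ and $D=N_{n}$. Then $A_{i}\prec C_{i}$ for all $i$ by the previous paragraph, while $B\prec N_{n}=D$ holds automatically, since every $n\times n$ nilpotent is dominated by the regular one (the partition $(n)$ dominates all partitions of $n$); so the middle inclusion follows as explained. For strictness, note that $C_{1}=N_{n-1}\oplus N_{1}$ is not regular, so $N_{n}\notin\overline{\mathcal{O}(C_{1})}$ (again because $(n)$ is not dominated by $(n-1,1)$), whence $\overline{\mathcal{O}(C_{1})}\subsetneq\mathfrak{N}$; combining this with $\overline{\mathcal{O}(C_{i})},\overline{\mathcal{O}(D)}\subseteq\mathfrak{N}$ and Lemma \ref{l1}, the set $\overline{\mathcal{O}(C_{1})\times\dots\times\mathcal{O}(C_{k})\times\mathcal{O}(D)}=\overline{\mathcal{O}(C_{1})}\times\dots\times\overline{\mathcal{O}(D)}$ is a proper subset of $\mathfrak{N}^{k+1}$.

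The step I expect to be the main obstacle is the claim that no $A_{i}$ is regular: it genuinely uses both the hypothesis $k\ge 2$ and the non-triviality convention, and without it one could only take $C_{i}=N_{n}$, for which $\overline{\mathcal{O}(N_{n})}=\mathfrak{N}$ and the required strict containment fails. One should also dispose of the degenerate case $n=2$, where $N_{n-1}\oplus N_{1}=O_{2}$ and in fact ${\rm Sol}=\emptyset$, so that the statement holds vacuously (e.g. with $C_{i}=O_{2}$ and $D=N_{2}$). Finally, a parallel argument at $m=n-1$ shows that $B$ is never regular in a non-trivial solution either, so one may equally take $D=N_{n-1}\oplus N_{1}$, giving a somewhat smaller ambient cone for the subsequent linear-capacity estimate.
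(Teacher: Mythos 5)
Your proof is correct, and it follows the same overall strategy as the paper -- dominate every tuple in ${\rm Sol}$ coordinatewise by a single tuple of nilpotent conjugacy classes and then pass to closures via Theorem \ref{gg} (equivalently Theorem \ref{gest} plus Lemma \ref{l1}), with strictness coming from the first dominating matrix being non-regular -- but you realize it in a more explicit, self-contained way. Where the paper runs a procedure over the diagram of Jordan partitions to select dominating matrices $C_{1}, \dots, C_{k}, D$ adapted to ${\rm Sol}$ and cites [7, Prop.~4.2] for the key fact that $C_{1}$ may be chosen with Jordan partition different from $(n)$, you prove that fact directly: evaluating (\ref{r1}) at $m=1$ and using $f(t)\geq t$ for $t \in \mathbb{N}$, $f(t)=0$ only for $t=0$, $k\geq 2$, $r_{B}(1)\leq n-1$ and the non-triviality convention, you show no $A_{i}$ in a solution can be conjugate to $N_{n}$, so the uniform choice $C_{i}=N_{n-1}\oplus N_{1}$, $D=N_{n}$ works (and your check that every partition with at least two parts is dominated by $(n-1,1)$ is sound). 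This buys a citation-free argument with concrete matrices, and you additionally justify the displayed equality $\overline{{\rm Sol}}=\bigcup \overline{\mathcal{O}(A_{1})\times \dots \times \mathcal{O}(B)}$ (finitely many orbit products, closure commutes with finite unions) and dispose of the degenerate case $n=2$, points the paper leaves implicit; your observation that $D$ may also be taken to be $N_{n-1}\oplus N_{1}$ slightly sharpens the ambient cone. What the paper's procedural construction buys instead is dominating matrices tailored to the actual set ${\rm Sol}$, which can be strictly smaller than $N_{n-1}\oplus N_{1}$ in each coordinate and hence yield a better constant in the subsequent linear-capacity bound. You are also right that non-triviality is essential and not a cosmetic convention: for $f={\rm id}$ the trivial tuple $(N_{n}, O_{n}, \dots, O_{n}, N_{n})$ solves (\ref{r1}), and admitting it would force $\overline{\mathcal{O}(C_{1})}=\mathfrak{N}$, destroying the strict containment -- this is exactly the content the paper outsources to [7, Prop.~4.2].
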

\begin{proof}
If the set ${\overline{\rm Sol}}$ is irreducible, there is nothing to prove. Suppose that the set ${\overline{\rm Sol}}$ is reducible. It is enough to prove that for a set ${\rm Rank(Sol)}$ we can find an element ${\rm Rk}_{0} \in {\rm Rank}(\mathfrak{N})$ such that ${\rm Rk} \preceq {\rm Rk}_{0}$ for every ${\rm Rk} \in {\rm Rank(Sol)}$. We construct the matrix ${\rm Rk}_{0} \in {\rm Rank}(\mathfrak{N})$ using the below procedure:
\begin{enumerate}
\item For a fixed $n \in \mathbb{N}_{0}$ - this number depends on the set $\overline{{\rm Sol}}$ - by Theorem \ref{gest} we can construct the diagram of all posible Jordan partitions ordered by the domination $\prec$ (the precise construction can be found in [6, Example 2.12]).
\item We consider the set of all nilpotent matrices from the first coordinate of the set $\overline{{\rm Sol}}$ and we denote it by ${\rm Cr}_{1}$. For all matrices in ${\rm Cr}_{1}$ we find their Jordan partitions. Notice that the set of all such Jordan partitions is finite.
\item Using the diagram from step $1$ we find a matrix $C_{1} \in \mathfrak{N}$, which dominates all matrices from the set ${\rm Cr}_{1}$ - of course such a matrix always exists. Moreover, the matrix $C_{1}$ can be chosen in a such way that its Jordan partition is different from $(n)$ (see [7, Prop. ~4.2~]).
\item We continue this procedure for another coordinates. In the consequence we find matrices $C_{1}, ..., C_{k}, D \in \mathfrak{N}$, such that 
$${\rm Rk} \preceq {\rm Rk}(C_{1}, ..., C_{k}, D) := {\rm Rk}_{0}, \,\,\, \forall \, {\rm Rk} \, \in \, {\rm Rank(Sol)}.$$
\end{enumerate} 
Thus Theorem \ref{gg} gives us the desired inclusion.

The strict containment
$\overline{\mathcal{O}(C_{1}) \times ... \times \mathcal{O}(C_{k}) \times \mathcal{O}(D)} \subsetneq \mathfrak{N}^{k+1}$ is the consequence of step (iii) in the above procedure.
\end{proof}

Using the above proposition we find quite obvious relation
$$\Lambda(\overline{{\rm Sol}}) \leq \frac{1}{2}\bigg(\dim(\overline{\mathcal{O}(C_{1})}) + ... + \dim(\overline{\mathcal{O}(C_{k})}) + \dim(\overline{\mathcal{O}(D)})\bigg).$$

\section{Further Problems}
In two consecutive notes we examine some rank function equations in the case of singular matrices. However our all attention was focused on the case with $f$ satisfying some natural properties and $g$ as the identity function and thus we are curious what can happen if $g(m) \neq m$. These lead us to the following problem.
\begin{qu} Describe all possible nilpotent solutions (i.e. a solution consisting of nilpotent matrices) to the equation (\ref{row1}) with a fixed function $f$ satisfing the same properties as in the hypothesis of Theorem \ref{pstw1}, a fixed function $g(m) \neq m$ and $S = \mathbb{N}_{0}$.
\end{qu}
The easiest example of such equation is 
\begin{equation}
\label{pita}
[r_{A}(m)]^2 + [r_{B}(m)]^2 = [r_{C}(m)]^2
\end{equation}
with $A,B,C \in \mathfrak{N}$. This equation is somehow connected with the famous Pythagorean equation and, what is not so surprising, we can construct some nilpotent solutions to (\ref{pita}) using Pythagorean triples -- see Example 2.7 $\&$ 2.8 in \cite{PS}. It is natural to formulate the following question.
\begin{qu}
Are there other solutions to the equation (\ref{pita}) than mentioned above ?
\end{qu}

Of course, the same questions can be formulated for singular non-nilpotent matrices.

In Section $4$, we investigate a certain generalization of Theorem \ref{gest} in the special case $\mathbb{F} = \mathbb{C}$. Since the method of proving this theorem breaks in the case of other fields, it is really interesting whether this theorem is still valid for an arbitrarily chosen algebraically closed field $\mathbb{F}$.

If $g(m) \neq m$, then some of methods presented in these notes break and thus we need new tools and ideas, even for the equation (\ref{pita}), because there is no natural rank function, which allows to mimic our argumentations.
\section*{Acknowledgment}
The author would like to express his gratitude to prof. Tomasz Szemberg and prof. Kamil Rusek for helpful suggestions.

Piotr Pokora, (Institute of Mathematics) Pedagogical University of Cracow, Podchor\c{a}\.zych 2, 30-084 Krak\'ow (Poland)
\\ e-mail: piotrpkr@gmail.com

\end{document}